\newcommand\pt{\text{\textup{pt}}}
\newtheorem{theorem}{Theorem}[section]
\newtheorem{lemma}[theorem]{Lemma}
\newtheorem{proposition}[theorem]{Proposition}
\newtheorem{corollary}[theorem]{Corollary}
\theoremstyle{remark}
\newtheorem{remark}[theorem]{Remark}
\newtheorem{examples}[theorem]{Examples}
\newcommand\cA{\mathcal{A}}
\newcommand\cC{\mathcal{C}}
\newcommand\cH{\mathcal{H}}
\newcommand\cK{\mathcal{K}}
\newcommand\bC{\mathbb{C}}
\newcommand\bF{\mathbb{F}}
\newcommand\bK{\mathbb{K}}
\newcommand\bQ{\mathbb{Q}}
\newcommand\bR{\mathbb{R}}
\newcommand\bZ{\mathbb{Z}}
\newcommand\fp{\mathfrak{p}}
\newcommand\End{\operatorname{End}}
\newcommand\Cl{\operatorname{C\ell}}
\newcommand\Tor{\operatorname{Tor}}
\newcommand\Spec{\operatorname{Spec}}
\newcommand\coker{\operatorname{coker}}
\providecommand{\co}{\colon\,}
\newcommand\Ca{$C^*$-algebra}
\newcommand{\lp}{\textup{(}}
\newcommand{\rp}{\textup{)}}
\begin{document}
\title[The K\"unneth Theorem in Equivariant $K$-Theory]
{The K\"unneth Theorem in Equivariant $K$-Theory\\ 
for actions of a cyclic group of order $2$}
\author{Jonathan Rosenberg}
\address{Department of Mathematics,
University of Maryland,
College Park, MD 20742, USA}
\email{jmr@math.umd.edu}
\urladdr{http://www.math.umd.edu/\raisebox{-.6ex}{\symbol{"7E}}jmr}
\keywords{K\"unneth Theorem, equivariant $K$-theory}
\subjclass[2010]{Primary 19L47; Secondary 19K99 55U25 55N91}
\begin{abstract}
The K\"unneth Theorem for equivariant {\lp}complex{\rp} $K$-theory
$K^*_G$, in the form developed by Hodgkin and others, fails
dramatically when $G$ is a finite group, and even when $G$ is cyclic
of order $2$. We remedy this situation in this very simplest case
$G=\bZ/2$ by using the power of $RO(G)$-graded equivariant $K$-theory.
\end{abstract}
\maketitle

\section{Introduction}
\label{sec:intro}
Equivariant $K$-theory, invented by Atiyah and Segal (for the original
exposition, see \cite{MR0234452}), is the simplest equivariant
cohomology theory to define. It is enormously useful: in
equivariant topology, in index theory (where it is needed for the
equivariant index theorem),
and in the theory of operator algebras.  (If $X$
is a locally compact $G$-space, then $C_0(X)$, the algebra of
continuous functions on $X$ vanishing at infinity, is a $G$-{\Ca}, and
$K^{-*}_G(X) \cong K^G_*(C_0(G))$. Note that on $G$-algebras, equivariant
$K$-theory becomes a homology theory instead of a cohomology
theory. For the theory of equivariant $K$-theory for operator algebras,
see \cite[Ch.\ V, \S 11]{MR1656031}.)

Despite its apparent simplicity, equivariant $K$-theory is still quite
puzzling in many respects.  This is already evident when one studies
the K\"unneth Theorem, or in other words, when one attempts to compute
$K^*_G(X\times Y)$ given knowledge of $K^*_G(X)$ and $K^*_G(Y)$ (or
dually, to compute $K^G_*(A\otimes B)$ in terms of $K^G_*(A)$ and
$K^G_*(B)$).  The first, and still the most important, work on this
problem was done by Hodgkin \cite{MR0478156}. Hodgkin observed that
since the coefficient ring for $G$-equivariant $K$-theory is the
(complex) representation ring $R(G)$ of $G$, a K\"unneth Theorem for
$K^*_G$ should take the form of a spectral sequence
\begin{equation}
\label{eq:Hodgkin}
\Tor_p^{R(G)}(K^*_G(X), K^{-*+q}_G(Y)) \Rightarrow K^{p+q}_G(X \times Y),
\end{equation}
which he constructed.
However, Hodgkin noticed that there are two big problems with this:
\begin{enumerate}
\item If $G$ is a disconnected compact Lie group, then $R(G)$ never
  has finite homological dimension, and so this sequence can't be
  expected to converge.
\item Even if $G$ is connected, but if $\pi_1(G)$ is not torsion-free,
  then the spectral sequence may converge, but to the wrong limit.
\end{enumerate}
In particular, Hodgkin's theorem, which was improved a bit by
Snaith \cite{MR0309115} and McLeod \cite{MR557175}, is of no help at
all if $G$ is finite or if $\pi_1(G)$ has torsion. In joint work of
the author with Schochet \cite{MR849938}, we extended Hodgkin's
theorem to the {\Ca}ic case of $K^G_*(A\otimes B)$, with $A$ and $B$
nuclear $G$-algebras in a suitable ``bootstrap'' class (containing all
countable inductive limits of separable commutative $G$-{\Ca}s), but
again only for $G$ connected compact Lie
with $\pi_1(G)$ torsion-free. (We did,
however, manage to elucidate the meaning of the condition that
$\pi_1(G)$ be torsion-free. For connected compact Lie groups $G$, this
is equivalent to the condition that every action of $G$ on the compact
operators $\cK$ be exterior equivalent to a trivial action.)
Thus the ``puzzle'' of what should replace the K\"unneth Theorem when
$G$ is finite remained open.

The other major piece of work on this problem was done by Chris
Phillips \cite{MR911880}. He did address the K\"unneth Theorem for
equivariant $K$-theory for $G$ finite, but only obtained a partial
result, since he was relying on the Localization Theorem of Segal
\cite[Proposition 4.1]{MR0234452}. 

While in this paper we  sometimes work in the generality of group
actions on
{\Ca}s, the reader should realize that the case where the {\Ca}s are
commutative is highly non-trivial and already new, and those not
interested in operator algebras can restrict themselves to this case
without missing very much. However, generalizing to the noncommutative
case makes the proofs easier, since as first pointed out in
\cite{MR650021}, geometric resolutions are actually easier to
construct in the noncommutative world.

\section*{Acknowledgements}
This work was supported by NSF Grant DMS-1206159. It grew out of the
author's puzzlement about certain aspects of equivariant $K$-theory
that came up in joint work with Mathai Varghese \cite{MathaiRos}
begun in March, 2012,
with partial support from the University of Adelaide and the
Australian Mathematical Sciences Institute. Some of the ideas in this
paper go back to the author's joint work with Claude Schochet in the
1980's.

\section{Background, Notation, and Previous Results}
\label{sec:classical}
We begin by recalling some previous results and establishing
notation. In particular, we restate the results of Phillips 
\cite{MR911880} and Izumi \cite{MR2053753}
in terms a topologist would appreciate since it is likely that their
work is not known to most topologists interested in equivariant
$K$-theory. 

Throughout this paper, $K$-theory or equivariant $K$-theory for spaces
always means complex topological $K$-theory \emph{with compact
supports} for locally compact Hausdorff spaces. In most cases these
spaces will be second countable and thus paracompact.
Because of Bott periodicity, we will sometimes regard this theory as
being $\bZ/2$-graded. This theory satisfies a very
strong form of excision --- if $X$ is a closed $G$-subspace of $Y$,
then $K^*_G(Y, X) \cong K^*_G(Y\smallsetminus X)$. (Note that
$Y\smallsetminus X$ is indeed locally compact.)

From now on, let $G$ be a cyclic group of prime order $q$ and let
$R=R(G)$ be its representation ring, which we identify with
$\bZ[t]/(t^q -1)$. Here $t$ represents the standard representation of
$G$ on $\bC$ in which a fixed generator $g$ of $G$ is sent to
$\zeta=\exp(2\pi i/q)$. This ring is the coefficient ring for equivariant
$K$-theory. Its ideal structure was studied in \cite{MR0248277}.  Let
$I=(t-1)$ be the augmentation ideal and let $J=(1+t+\cdots +
t^{q-1})$. Since $(t-1)(1+t+\cdots + t^{q-1})=0$ in $R$, each prime
ideal $\fp$ of $R$ contains either $I$ or $J$, and these are the
unique minimal prime ideals of $R$ by \cite[Proposition
3.7]{MR0248277}. In the language of Segal, the prime ideal $I$ has
support $\{1\}$, while the prime ideal $J$ has support $G$. (Since
$\{1\}$ and $G$ are the only subgroups of $G$, these are the only two
possibilities.) Note that $G/I\cong \bZ$, while $G/J\cong \bZ[\zeta]$
is the ring of integers in the cyclotomic field $\bQ[\zeta]$. 
Similarly, the localizations of $R$ at these two prime ideals are
$R_I \cong \bQ$ and $R_J \cong \bQ[\zeta]$. Since $G/I$ and $G/J$ are
both Dedekind domains, the other prime ideals of
$R$ are all maximal ideals. If such a maximal ideal $\fp$ contains
$I$, then it is of the form $(I, p)$ for $p$ a prime of
$\bZ$ generating $(\fp\cap \bZ) \triangleleft \bZ$, 
while if it contains $J$, then it contains $(J, p)$ for some prime $p$.
The arithmetic of the cyclotomic field (the splitting of
primes $p$ in $\bZ[\zeta]$) will come in at this point when one 
classifies the prime (and necessarily maximal) ideals over $(J,p)$. 
There are now two cases: if $p=q$, then $R/(p)\cong \bF_q[t]/(t^q-1)
\cong \bF_q[u]/(u^q)$, where $u=t-1$. So $p$ ramifies in the
cyclotomic field and $(J, p) \subseteq (I, p)$, with $(I, p)$ the
unique maximal ideal of $R$ containing $p$. This ideal has support
$\{1\}$ in the sense of Segal. Otherwise, if $q\ne p$, the primes over
$(J, p)$ are distinct from the primes over $I$, and have support $G$
(cf.\ \cite[Proposition 6.2.2]{MR911880}).
In any event, the localization $R_{\fp}$ of $R$ at a maximal
ideal will be isomorphic to $\bZ_{(p)}$ if $\fp = (I, p)$ and to
a localization of $\bZ[\zeta]$ if $\fp \supseteq (J, p)$.
So $R_{\fp}$ is a discrete valuation ring and thus has
global dimension $1$. For purposes of this paper we will eventually
further restrict to the case $q=2$, in which case $J=(t+1)$ and 
$R/J$ is also isomorphic to $\bZ$, and the
maximal ideals of $R$ are precisely $(I, p)$ or $(J, p)$ for $p$ a
prime. In this case $(I, 2) = (J, 2)$ since $(t-1)+2 = t+1$; otherwise
the ideals $(I,p)$ and $(J, p)$ are all distinct. The
picture of $\Spec R$, showing the inclusion relations among prime
ideals, is shown in Figure \ref{fig:specR}. Note the left-right
reflection symmetry of the diagram, which can be explained by the
existence of an automorphism $t\mapsto -t$ of $R$ (quite special to
the case $q=2$) which interchanges $I$ and $J$.
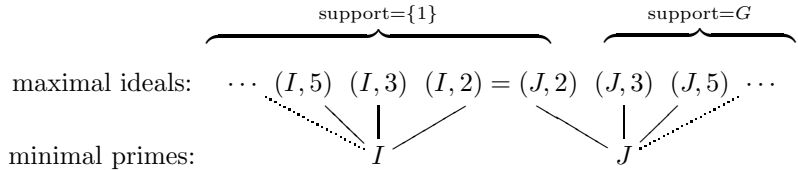
\begin{figure}[ht]
\[
\xymatrix@C-2pc@R-1pc{
& & & & \save*{\overbrace{\makebox[1.8in]{}}^{\text{support} = \{1\}}}\restore & & &
  \save*{\overbrace{\makebox[1in]{}}^{\text{support} = G}} \restore&\\ 
\text{maximal ideals:}& &
\cdots & (I, 5) & (I, 3) & (I, 2)=(J,2) 
& (J, 3) & (J, 5) & \cdots\\
\text{minimal primes:} & & &  & I \ar@{.}[ull] 
\ar@{-}[ul] \ar@{-}[u] \ar@{-}[ur] 
& & J \ar@{-}[ul] \ar@{-}[u] \ar@{-}[ur] \ar@{.}[urr]& & 
}
\]
\caption{A picture of $\Spec R$ for $G=\bZ/2$} 
\label{fig:specR}
\end{figure}

Before proceeding, it is convenient to recall the following
calculation of
equivariant $K$-theory for free actions, which almost certainly is
known to experts but is not explicit in \cite{MR0234452}.
\begin{proposition}
\label{prop:equivKfree}
Let $G$ be a cyclic group of order $q$
and let $X$ be a compact {\bfseries free} $G$-space. 
Then the $R$-module structure on $K^*_G(X)\cong K^*(X/G)$ is
defined by letting $t$ act by tensoring with the line bundle $V$
with $c_1(V)=c$, where $c$ is the image in $H^2(X/G, \bZ)$ under the
Bockstein homomorphism of the class in $H^1(X/G, \bF_q)$ classifying
the $q$-to-$1$  covering map $X\to X/G$. One can also realize $V$ more
explicitly as the fiber product $X\times_G \bC$, 
where $G$ acts on $\bC$ by the nontrivial character $t$.

If $A$ is a closed $G$-invariant subspace of $X$, then the
$R(G)$-module structure on 
\[
K^*_G(X,A)\cong K^*(Y, B), \quad Y=X/G,\, B=A/G,
\]
is again
defined by letting $t$ act by cup-product with $[V]\in
K^0(X/G)$. {\lp}Recall that for any pair $(Y,B)$, we have the cup-product
$K^0(Y)\otimes K^*(Y,B) \to K^*(Y,B)$.{\rp}
\end{proposition}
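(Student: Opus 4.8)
The plan is to reduce everything to the statement that for a free action equivariant bundles descend to the quotient, and then pin down the $R$-module structure by identifying a single line bundle and computing its first Chern class via a universal calculation over $B\bZ/q$.

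\emph{Step 1: the isomorphism of groups.} First I would recall the underlying isomorphism. For a (numerable, e.g.\ second countable) free $G$-space $X$, pullback along $p\co X\to X/G$ is an equivalence from vector bundles over $X/G$ to $G$-equivariant vector bundles over $X$, with quasi-inverse $E\mapsto E/G$; this is the standard descent property of free actions, implicit in \cite{MR0234452}. Passing to Grothendieck groups, and suspending to handle odd degrees, gives $K^*_G(X)\cong K^*(X/G)$. The same descent works with compact supports for a locally compact free $G$-space: a class in $\wt K^0_G(X^+)$ is represented by $G$-bundles on $X^+$ agreeing near the (fixed) point $\infty$, and on the complement $X$, where the action is free, this data descends to $X^+/G\cong(X/G)^+$, and conversely.

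\emph{Step 2: the module structure.} By definition $t\in R$ acts on $K^*_G(X)$ by (graded) multiplication by the class of the trivial bundle $X\times\bC_t$ carrying the diagonal $G$-action, where $\bC_t$ is the representation on which $g$ acts as $\zeta$. The key point is that $X\times\bC_t\cong p^*V$ as $G$-equivariant bundles over $X$, where $V=X\times_G\bC_t$; the map $(x,v)\mapsto(x,[x,v])$ is the isomorphism, and $G$-equivariance follows at once from the relation $[x,v]=[gx,\zeta v]$ in $V$. Hence for $E=p^*F$ we get $(X\times\bC_t)\otimes E\cong p^*(V\otimes F)$, so under the identification of Step 1 the operator ``multiplication by $t$'' becomes $F\mapsto V\otimes F$, i.e.\ cup product with $[V]\in K^0(X/G)$. (This is consistent with $t$ being a unit in $R$, as $V^{\otimes q}=X\times_G\bC_t^{\otimes q}$ is trivial.) This proves all the assertions except the formula $c_1(V)=c$.

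\emph{Step 3: the first Chern class.} The free action makes $p\co X\to X/G$ a principal $G$-bundle, classified by a map $f\co X/G\to BG=B\bZ/q$ with $f^*$ of the tautological class equal to the class $w\in H^1(X/G;\bF_q)$ classifying the covering; and $V$ is the line bundle associated to $f$ and the character $t\co G\to U(1)$, $g\mapsto\zeta$, hence is classified by $Bt\circ f\co X/G\to BU(1)=K(\bZ,2)$. By naturality of $c_1$ and of the Bockstein it suffices to treat the universal case $X/G=B\bZ/q$: there $H^*(B\bZ/q;\bZ)=\bZ[u]/(qu)$ with $\deg u=2$, the tautological class in $H^1(B\bZ/q;\bF_q)$ has Bockstein a generator $u$, and $u=c_1$ of the line bundle of the standard character $g\mapsto\zeta$ --- these are the classical lens space computations, with the generator conventions arranged to agree. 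Pulling back along $f$ gives $c_1(V)=\beta(w)=c$.

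\emph{Step 4 and the main obstacle.} For the relative statement, strong excision gives $K^*_G(X,A)\cong K^*_G(X\smallsetminus A)$ and $K^*(Y,B)\cong K^*(Y\smallsetminus B)$, while $X\smallsetminus A$ is a locally compact free $G$-space with quotient $Y\smallsetminus B$; Steps 1--3 then show that $t$ acts by cup product with $[V|_{X\smallsetminus A}]$, which by naturality of the cup product $K^0(Y)\otimes K^*(Y,B)\to K^*(Y,B)$ under restriction to the open set $Y\smallsetminus B$ coincides with cup product with $[V]\in K^0(Y)$. The only genuinely non-formal ingredient is Step 3 --- identifying $c_1(V)$ \emph{exactly} (not merely up to a unit) with the Bockstein of the covering class --- but this is the standard $\bZ/q$ cohomology computation; everything else is descent plus naturality.
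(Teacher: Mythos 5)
Your proposal is correct and follows essentially the same route as the paper: the paper's (two-sentence) proof is exactly your Step 2 --- the module action of $t$ is tensoring with the representation $(\bC,t)$, which under descent for the free action becomes tensoring with $V=X\times_G\bC$ --- with the Chern-class identification and the relative case declared immediate. Your Steps 1, 3, and 4 merely supply the standard details (descent, the universal computation over $B\bZ/q$, and excision/naturality) that the paper leaves implicit.
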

\begin{proof}
The definition of the $R(G)$-action on $K^*_G(X)$ or on $K^*_G(X,A)$
implies that the result of applying the module action of $t$
corresponds to tensoring with $(\bC, t)$, which is the same after
applying the isomorphism $K^*_G(X)\cong K^*(Y)$ or $K^*_G(X,A)\cong
K^*(Y,B)$ as the vector bundle tensor product with $V$. The rest is
immediate. 
\end{proof}

The following result of Izumi constrains $K_*^G(A)$ if $A$ is a
$G$-{\Ca}, or $K^*_G(X)$ if $X$ is a locally compact $G$-space, and if
$K_*(A)=0$ or $K^*(X)=0$ (in all degrees). 

\begin{theorem}[{Izumi, \cite[Lemma 4.4]{MR2053753}}]
\label{thm:Izumi}
Let $A$ be a $G$-{\Ca}, where $G$ is a cyclic group of prime order
$q$. Assume that $A$ is $K$-contractible, i.e., that
$K_*(A)=0$ {\lp}non-equivariantly, in both odd and
even degrees{\rp}. Then, as a
$\bZ$-module {\lp}i.e., forgetting the $R$-module structure{\rp},
$K_*^G(A)$ is uniquely $q$-divisible. Similarly, if $X$ is a locally
compact $G$-space and $K^*(X)=0$, then $K^*_G(X)$ is uniquely
$q$-divisible as a $\bZ$-module.
\end{theorem}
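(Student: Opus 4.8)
The plan is to prove that $1-t\in R$ acts \emph{invertibly} on $K^*_G(X)$ (and, in the operator-algebra formulation, on $K^G_*(A)$), and then to read off unique $q$-divisibility from the structure of $R$. For the latter step, note that $(1-t)(1+t+\cdots+t^{q-1})=1-t^q=0$ in $R$, so after inverting $1-t$ the element $1+t+\cdots+t^{q-1}=\Phi_q(t)$ becomes $0$, whence $R[(1-t)^{-1}]\cong\bZ[\zeta][(1-\zeta)^{-1}]$. In $\bZ[\zeta]$ one has $q=\Phi_q(1)=\prod_{j=1}^{q-1}(1-\zeta^j)$, and each $1-\zeta^j$ with $1\le j\le q-1$ is a unit multiple of $1-\zeta$, so $q=(\text{unit})\cdot(1-\zeta)^{q-1}$ is a unit in $R[(1-t)^{-1}]$ (for $q=2$ this ring is simply $\bZ[1/2]$). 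Hence any $R$-module on which $1-t$ acts invertibly is a module over a ring in which $q$ is invertible, and is therefore uniquely $q$-divisible as an abelian group. So it suffices to establish the invertibility of $1-t$.

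Consider the one-dimensional complex $G$-representation $\bC(t)$, on which the generator $g$ acts by $\zeta$. Its unit sphere $S(\bC(t))$ is a circle on which $G$ acts \emph{freely} (by rotation through $2\pi/q$), so $X\times S(\bC(t))$ also carries a free $G$-action. Applying $K^*_G$ to the cofibre sequence $S(\bC(t))_+\to S^0\to S^{\bC(t)}$ smashed with $X_+$, and using the equivariant Thom isomorphism to identify $\widetilde K^*_G(S^{\bC(t)}\wedge X_+)$ with $K^*_G(X)$, yields the six-term exact sequence
\[
\cdots\to K^n_G(X)\xrightarrow{\ \cdot(1-t)\ }K^n_G(X)\to K^n_G(X\times S(\bC(t)))\to K^{n+1}_G(X)\to\cdots,
\]
in which the displayed map is multiplication by the $K$-theoretic Euler class $\lambda_{-1}(\bC(t))=1-[\bC(t)]=1-t$ (up to the unit $t$, depending on one's sign convention; only the ideal $(1-t)$ will be used). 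Because the $G$-action on $X\times S(\bC(t))$ is free, Proposition~\ref{prop:equivKfree} gives $K^*_G(X\times S(\bC(t)))\cong K^*((X\times S(\bC(t)))/G)$, and this quotient is the mapping torus $\cM_\alpha$ of the self-homeomorphism $\alpha$ of $X$ induced by $g$ --- a fibre bundle over $S^1$ with fibre $X$ and monodromy $\alpha$.

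The Wang exact sequence of the fibration $X\to\cM_\alpha\to S^1$,
\[
\cdots\to K^n(\cM_\alpha)\to K^n(X)\xrightarrow{\ 1-\alpha^*\ }K^n(X)\to K^{n+1}(\cM_\alpha)\to\cdots,
\]
together with the hypothesis $K^*(X)=0$, forces $K^*(\cM_\alpha)=0$. Substituting this into the Gysin sequence shows that multiplication by $1-t$ is an isomorphism on $K^*_G(X)$, and by the first paragraph the topological statement follows. The $C^*$-algebraic statement is proved by the same scheme: tensor $A$ with the $G$-equivariant extension $0\to C_0(\bC(t))\to C_0(D(\bC(t)))\to C(S(\bC(t)))\to 0$, apply equivariant Bott periodicity to obtain the analogous six-term sequence with connecting map multiplication by $1-t$, and deduce the vanishing of $K^G_*(A\otimes C(S(\bC(t))))$ from the Green--Julg isomorphism $K^G_*(-)\cong K_*((-)\rtimes G)$ together with a Pimsner--Voiculescu-type sequence --- the crossed product $(A\otimes C(S(\bC(t))))\rtimes G$ is a continuous-trace algebra over $S^1$ with fibre $M_q(A)$, whose $K$-theory is computed out of $K_*(M_q(A))=K_*(A)=0$.

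The one substantive point in this plan is the identification of the connecting homomorphism in the Euler-class sequence for $\bC(t)$ as multiplication by $1-t$; this is exactly where the equivariant Thom isomorphism (and the computation of the Euler class of a one-dimensional representation) enters, and in the operator-algebra version it is the construction of the corresponding $C^*$-algebraic sequence via equivariant Bott periodicity. The remaining ingredients --- freeness of the $G$-action on $S(\bC(t))$, the Wang and Pimsner--Voiculescu sequences, and the arithmetic of the prime of $\bZ[\zeta]$ over $q$ --- are routine.
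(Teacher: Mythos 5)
Your argument is correct, and in fact the paper offers no proof of this statement at all --- it simply attributes the result to Izumi and cites \cite[Lemma 4.4]{MR2053753} --- so there is no in-paper argument to compare against. Your route (show that $1-t$ acts invertibly via the Euler-class sequence of $\bC(t)$ and the freeness of the $G$-action on $S(\bC(t))$, then observe that $R[(1-t)^{-1}]\cong\bZ[\zeta][(1-\zeta)^{-1}]$ is a ring in which $q=(\text{unit})\cdot(1-\zeta)^{q-1}$ is invertible) is a complete and standard localization-type proof, and it is essentially the mechanism behind Izumi's lemma; note that the weaker transfer argument (that $1+t+\cdots+t^{q-1}$ annihilates $K_*^G(A)$ because it factors through $K_*(A)=0$) would \emph{not} suffice, since e.g.\ $\bZ[\zeta]/(1-\zeta)$ is killed by the norm element but is not uniquely $q$-divisible, so the invertibility of $1-t$ really is the essential point and you supply it. Two small remarks: the identification $K^*_G(X\times S(\bC(t)))\cong K^*(\cM_\alpha)$ uses Proposition \ref{prop:equivKfree} for a locally compact rather than compact free $G$-space, which is harmless but worth flagging; and in the $C^*$-algebraic version the crossed product $(A\otimes C(S(\bC(t))))\rtimes G$ is a locally trivial bundle of algebras over $S^1$ with fibre $M_q(A)$ (equivalently, a mapping torus up to Morita equivalence), but calling it a ``continuous-trace algebra'' is a misnomer unless $A$ itself has continuous trace; the $K$-theory computation you intend (a Wang/Pimsner--Voiculescu sequence built from $K_*(M_q(A))=K_*(A)=0$) is nonetheless valid.
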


Note that Izumi phrased Theorem \ref{thm:Izumi}
in terms of the $K$-theory of the crossed
product $A\rtimes \bZ/q$, but this is the same as $K_*^G(A)$ because
of the Green-Julg Theorem (\cite{MR625361}, 
\cite[Theorem 11.7.1]{MR1656031}).  

The theorem cannot be improved, even in the abelian case, because if 
if $X$ is a locally compact $G$-space and $K^*(X)=0$, then $K^*_G(X)$
is \emph{not necessarily zero}. It was pointed out in \cite[Lemma 5.7]%
{MR2560910} that Lowell Jones's converse \cite{MR0295361}
to P.\ A.\ Smith's Theorem provides a counterexample. However, since
the proof there was slightly garbled (as pointed out by Thomas Schick
in the review in MathSciNet), we restate it again.
\begin{proposition}
\label{prop:KzeroKGnot}
Let $G$ be a cyclic group of prime order $q$. Then there is a
contractible finite $G$-CW complex $Y$ for which $L = Y^G$ has torsion
in its homology of order prime to $q$. We can choose a basepoint
$x_0\in L$ so that if $X= Y\smallsetminus \{x_0\}$, $K^*(X)=0$ while
$K^*_G(X)\ne 0$.
\end{proposition}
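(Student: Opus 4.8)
The plan is to construct $Y$ by hand using Jones's realization theorem, starting from a carefully chosen fixed set $L$, and then to detect $\wt K^*_G$ of the punctured space by localizing the representation ring at a prime of full support and positive residue characteristic.

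First I would choose $L$ to be a mod-$m$ Moore space, say $L = M(\bZ/m, 2) = S^2 \cup_m e^3$ with the top cell attached by a degree-$m$ map, where $m \ge 2$ is chosen coprime to $q$ (for $q = 2$, any odd $m \ge 3$). This is a finite $2$-dimensional CW complex with $\wt H_*(L;\bZ) \cong \bZ/m$ in degree $2$ and $0$ in all other degrees; in particular $\wt H_*(L;\bF_q) = 0$, so $L$ is $\bZ/q$-acyclic. Lowell Jones's converse to the fixed-point theorem of P.~A.~Smith \cite{MR0295361} then furnishes a finite contractible $G$-CW complex $Y$ whose fixed set $Y^G$ is exactly $L$, occurring as a subcomplex — and this $L$ carries precisely the torsion demanded by the statement. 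I would then take $x_0$ to be a $0$-cell of $L$ and set $X = Y \smallsetminus \{x_0\}$, a locally compact $G$-space with $X^G = L \smallsetminus \{x_0\}$.

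The vanishing $K^*(X) = 0$ is formal: by the strong excision property recalled above, $K^*(X) \cong K^*(Y, \{x_0\}) \cong \wt K^*(Y)$, which is $0$ since $Y$ is contractible. (Observe that Theorem~\ref{thm:Izumi} then forces $K^*_G(X)$ to be uniquely $q$-divisible; this is consistent with — and in fact explains why we must demand — that the torsion in $H_*(L)$ be prime to $q$.) For the non-vanishing $K^*_G(X) \ne 0$ I would invoke Segal's localization theorem \cite[Proposition 4.1]{MR0234452}. Since $X^G = L \smallsetminus \{x_0\}$ carries the trivial $G$-action, $K^*_G(X^G) \cong K^*(X^G) \otimes_\bZ R \cong \wt K^*(L) \otimes_\bZ R$, and for our $L$ this is $R/mR$ concentrated in odd degree, by the cofibre sequence $S^2 \xrightarrow{m} S^2 \to L \to S^3$. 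Now pick a prime $\ell$ with $\ell \mid m$ and $\ell \ne q$ (possible since $m > 1$ is coprime to $q$), and let $\fp \subseteq R$ be a prime ideal over $(J,\ell)$. By the analysis of $\Spec R$ above, $\fp$ has support $G$ and $R_\fp$ is a discrete valuation ring of residue characteristic $\ell$. Applying the localization theorem to the pair $(Y,\{x_0\})$ — legitimate since $K^*_G(X) \cong K^*_G(Y,\{x_0\})$ by strong excision — the restriction map yields $K^*_G(X)_\fp \cong K^*_G(X^G)_\fp \cong (R/mR)_\fp \cong R_\fp/\ell^k R_\fp$, where $m = \ell^k u$ with $u$ a unit of $R_\fp$. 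Because $\ell$ lies in the maximal ideal of the discrete valuation ring $R_\fp$, this quotient is a nonzero finite ring, so $K^*_G(X)_\fp \ne 0$ and therefore $K^*_G(X) \ne 0$.

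The conceptual crux is choosing the right localization. At a prime of support $G$ but residue characteristic $0$ — for instance $J$ itself, whose localization is $\bQ$ when $q = 2$ — the torsion module $R/mR$ localizes to $0$ and one sees nothing, so one is forced to work at the positive-characteristic discrete valuation ring $R_\fp$ with $\ell = \mathrm{char}\,\kappa(\fp)$ dividing $m$; this is exactly where the arithmetic of $\Spec R$ enters. The remaining ingredients are routine and I would not belabor them: extending Segal's theorem from compact $G$-spaces to the pair $(Y,\{x_0\})$ (five lemma and exactness of localization, using that $\{x_0\}$ is itself the fixed set of $\{x_0\}$), the splitting $K^*_G \cong K^* \otimes R(G)$ for a trivial action, and the identification $\wt K^*(M(\bZ/m,2)) \cong \bZ/m$.
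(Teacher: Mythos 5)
Your proof is correct and follows essentially the same route as the paper: invoke Jones's converse to Smith theory to realize a mod-$q$-acyclic finite complex $L$ with $q'$-torsion as the fixed set of a contractible finite $G$-CW complex, puncture at a point of $L$, and detect $K^*_G(X)$ via Segal's localization theorem at a maximal ideal over $(J,\ell)$ with $\ell\ne q$. The only difference is that you make the choice of $L$ explicit as a Moore space $M(\bZ/m,2)$ and compute $\wt K^*(L)\cong\bZ/m$ directly, where the paper simply asserts that such an $L$ can be chosen.
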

\begin{proof}
By \cite{MR0295361}, if $L$ is a finite CW-complex with $\widetilde
H^*(L, \bZ/q)=0$ (in all degrees), 
then we can choose a contractible finite $G$-CW
complex $Y$ with $L = Y^G$. Clearly $L$ can be chosen with a basepoint $x_0$
so that $K^*(L\smallsetminus \{x_0\})$ contains torsion of
order a prime $\ell\ne q$ (though $K^*(L\smallsetminus \{x_0\};
\bZ/q) = \widetilde K^*(L; \bZ/q)=0$). Since $Y$ is contractible, if
$X= Y\smallsetminus 
\{x_0\}$, then $K^*(X)=\widetilde K^*(Y)=0$. Choose a maximal ideal
$\fp$ of $R$ containing $(J, \ell)$. Then $\fp$ has support $G$ in the
sense of Segal, so by the Localization Theorem \cite[Proposition 4.1]%
{MR0234452}, $K^*_G(X)_{\fp}\cong K^*_G(L\smallsetminus \{x_0\})_{\fp}
\cong K^*(L\smallsetminus \{x_0\})\otimes_{\bZ}R_{\fp}$. This is
non-zero since $K^*(L\smallsetminus \{x_0\})\otimes_{\bZ} \bZ/\ell \ne 0$
and $R_{\fp}/{\fp}_{\fp}$ is a finite field of characteristic $\ell$.
Thus $K^*_G(X)_{\fp}\ne 0$ and $K^*_G(X)\ne 0$.
\end{proof}

Applying Takai Duality
\cite{MR0365160} to Theorem \ref{thm:Izumi}, 
we deduce the following. 
\begin{corollary}
\label{thm:dualIzumi}
Let $A$ be a $G$-{\Ca}, where $G$ is a cyclic group of prime order
$q$. If $K_*^G(A)=0$ {\lp}in all degrees{\rp}, then
$K_*(A)$ is uniquely $q$-divisible as a $\bZ$-module. 
Similarly, if $X$ is a locally
compact $G$-space and $K^*_G(X)=0$, then $K^*(X)$ is uniquely
$q$-divisible.
\end{corollary}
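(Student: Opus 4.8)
The plan is to deduce Corollary~\ref{thm:dualIzumi} from Theorem~\ref{thm:Izumi} by applying the latter not to $A$ itself but to the crossed product $B = A\rtimes G$, equipped with its dual $\widehat G$-action. The first thing to note is that since $G$ is cyclic of prime order $q$, the Pontryagin dual $\widehat G$ is again cyclic of order $q$, so Theorem~\ref{thm:Izumi} applies verbatim to $\widehat G$-algebras.

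Next, I would observe that the hypothesis $K_*^G(A)=0$ is, via the Green--Julg isomorphism $K_*^G(A)\cong K_*(A\rtimes G)$ already invoked after Theorem~\ref{thm:Izumi}, precisely the statement that $B=A\rtimes G$ is $K$-contractible. Applying Theorem~\ref{thm:Izumi} to the $\widehat G$-algebra $B$ then shows that $K_*^{\widehat G}(B)$ is uniquely $q$-divisible as a $\bZ$-module.

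It then remains to identify $K_*^{\widehat G}(B)$ with $K_*(A)$. Green--Julg (now for the group $\widehat G$) gives $K_*^{\widehat G}(B)\cong K_*\bigl((A\rtimes G)\rtimes\widehat G\bigr)$, and Takai duality \cite{MR0365160} identifies $(A\rtimes G)\rtimes\widehat G$ with $A\otimes\cK(L^2(G))$; since $G$ is finite this is just $A\otimes M_q(\bC)$, which is Morita equivalent to $A$, so the last group is isomorphic to $K_*(A)$. All the maps in this chain are isomorphisms of abelian groups, so $K_*(A)$ inherits the property of being uniquely $q$-divisible. For the statement about spaces, I would take $A=C_0(X)$, so that $K_*^G(C_0(X))\cong K^{-*}_G(X)=0$; the $C^*$-algebra case then yields that $K^{-*}(X)\cong K_*(C_0(X))$, hence $K^*(X)$, is uniquely $q$-divisible.

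I do not expect a genuine obstacle here, since the argument is essentially a routine dualization. The only points requiring care are (i) that unique $q$-divisibility is a property of the underlying abelian group alone, and is therefore automatically transported along every isomorphism used; and (ii) that the input to Izumi's theorem must be the non-equivariant $K$-theory of $A\rtimes G$, not of $A$ --- which is exactly why the crossed product, rather than $A$ itself, is the algebra to which Theorem~\ref{thm:Izumi} is applied.
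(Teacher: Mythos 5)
Your argument is correct and is precisely the paper's intended proof: the paper simply says the corollary follows by ``applying Takai duality to Theorem \ref{thm:Izumi},'' and your write-up (pass to $B=A\rtimes G$ with the dual action, use Green--Julg to see $B$ is $K$-contractible, apply Izumi to $B$, and identify $K_*^{\widehat G}(B)$ with $K_*(A)$ via Takai duality and Morita invariance) is exactly the unwinding of that one-line proof. No issues.
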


Now suppose one has a $G$-{\Ca} $B$ with $K_*^G(B)=0$ but
$K_*(B)\ne 0$. One can get such an example by starting with a
$K$-contractible $G$-{\Ca} for which $K_*^G(A)$ is non-zero, 
as provided by Proposition \ref{prop:KzeroKGnot}  or by \cite[Lemma
4.7]{MR2053753}.  Then
let $B=A\rtimes \bZ/q$ and consider the \emph{dual} action of $\bZ/q$
on $B$; then $B$ is \emph{not} $K$-contractible since $K_*(B)\cong
K_*^G(A) \ne 0$, but by Takai duality, one finds that 
$K_*^G(B) \cong K_*(A) = 0$. However, $K_*(B)\cong K^G_*(C(G)\otimes
B)$, so we 
find that \emph{the K\"unneth Theorem in equivariant $K$-theory fails}
for $B$, in the sense that $K_*^G(B)$ is identically $0$ but there is
a $G$-algebra (namely $C(G)$) for which the tensor product has
non-vanishing (but uniquely $q$-divisible)
equivariant $K$-theory.  (See \cite{MathaiRos} for more
details.) To sum up, knowing just $K_*^G(C)$ and $K_*^G(D)$, we
cannot hope for a spectral sequence computing $K_*^G(C\otimes D)$. The
fact that the (na\"\i ve) K\"unneth Theorem in equivariant $K$-theory
fails for actions of finite groups was already pointed out in
\cite[Example 6.6.9]{MR911880}. 

However, we can now state the K\"unneth Theorem of Phillips.
\begin{theorem}[{Phillips, \cite[Theorem 6.4.6]{MR911880}}]
\label{thm:Phillips}
Let $G$ be a cyclic group of prime order $q$, and let $\fp$ be a prime
ideal of $R=R(G)$ with support $G$. {\lp}Thus either $\fp=J$ or $\fp$
contains $(J, p)$ for $p$ a prime $\ne q$.{\rp} Let $A$ and $B$ be
separable
$G$-{\Ca}s with $B$ nuclear and with $A$ in a suitable bootstrap
category containing all equivariant inductive limits of separable type I
$G$-{\Ca}s and closed under various conditions {\lp}see 
\cite[Theorem 6.4.7]{MR911880} for more details{\rp}.
Then there is a functorial short exact sequence
\[
0 \to K_*^G(A)_{\fp} \otimes_{R_{\fp}} K_*^G(B)_{\fp}
\xrightarrow{\omega_{\fp}} K_*^G(A\otimes B)_{\fp} \to
\Tor_1^{R_{\fp}}\bigl(K_*^G(A)_{\fp}, K_{*+1}^G(B)_{\fp} \bigr) \to 0,
\]
which splits, though not naturally.  The theorem holds in particular
if $A=C_0(X)$ and $B=C_0(Y)$ with $X$ and $Y$ second countable locally
compact $G$-spaces.
\end{theorem}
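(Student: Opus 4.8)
The plan is to localize the Hodgkin-type spectral sequence at $\fp$ and then observe that, after localization, the obstructions that cause the classical theorem to fail for finite groups disappear. The starting point is the Kasparov product, which furnishes for separable $A$ (in the bootstrap class) and nuclear $B$ a natural map
\[
\omega\colon K_*^G(A)\otimes_R K_*^G(B) \to K_*^G(A\otimes B).
\]
After tensoring everything with $R_{\fp}$, exactness of localization gives a natural map $\omega_{\fp}$ of $R_{\fp}$-modules. The key structural input is that $\fp$ has support $G$: by the discussion of $\Spec R$ in Section \ref{sec:classical}, this forces $R_{\fp}$ to be a discrete valuation ring (a localization of $\bZ[\zeta]$, or of $\bZ$ when $q=2$ and $\fp=J$), hence a PID of global dimension $1$. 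So over $R_{\fp}$ the only nonvanishing $\Tor$ groups are $\Tor_0$ and $\Tor_1$, and the universal-coefficient machinery of \cite{MR650021} (geometric resolutions in the noncommutative setting, which is why the noncommutative generality is convenient) produces a short exact sequence of exactly the stated shape.

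The steps, in order, are as follows. First, I would construct a geometric resolution of $A$ within the bootstrap category: a two-term ``free'' resolution realized by a mapping-cone sequence of $G$-algebras whose equivariant $K$-theory is a direct sum of copies of $R$ (or shifts thereof), mapping onto $K_*^G(A)$ — this is possible precisely because $A$ lies in the bootstrap class and because, after localizing at $\fp$, $R_{\fp}$ has global dimension $1$ so the resolution can be taken of length one. Second, I would tensor the resolution with $B$ and apply $K_*^G(-\otimes B)$; the Künneth formula for the model algebras (where equivariant $K$-theory of $R\otimes B$ is just $K_*^G(B)$, and similarly for shifts) together with the long exact sequence of the mapping cone yields, after localizing at $\fp$, the six-term (here three-term, because of the $\bZ/2$-grading) exact sequence
\[
0 \to K_*^G(A)_{\fp}\otimes_{R_{\fp}} K_*^G(B)_{\fp} \to K_*^G(A\otimes B)_{\fp} \to \Tor_1^{R_{\fp}}\bigl(K_*^G(A)_{\fp}, K_{*+1}^G(B)_{\fp}\bigr)\to 0.
\]
Third, identify the left-hand map with $\omega_{\fp}$ by naturality of the Kasparov product against the maps in the resolution. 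Fourth, obtain the (unnatural) splitting: since $R_{\fp}$ is a PID, $K_*^G(A)_{\fp}$ splits as a direct sum of cyclic modules, and the splitting is assembled one summand at a time exactly as in the classical Künneth argument. Finally, specialize to $A=C_0(X)$, $B=C_0(Y)$ and use $C_0(X)\otimes C_0(Y)\cong C_0(X\times Y)$ together with the identification $K^{-*}_G(X)\cong K_*^G(C_0(X))$ to get the topological statement.

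The main obstacle — and the crux of why this works for finite $G$ at all — is showing that localizing at a support-$G$ prime actually makes the relevant homological dimension drop to $1$, i.e., that $K_*^G(A)_{\fp}$ admits a length-one geometric resolution inside the bootstrap category. Globally $R=R(\bZ/q)$ has infinite homological dimension (this is exactly Hodgkin's obstruction (1)), so one genuinely needs the localization: $R_{\fp}$ for $\fp$ of support $G$ is a DVR, and one must check that the geometric resolution can be constructed at the level of $G$-algebras and then localized, rather than constructed after localization (where there is no obvious category of algebras). This is the point where Segal's Localization Theorem \cite[Proposition 4.1]{MR0234452} enters — it guarantees that localized equivariant $K$-theory of a $G$-space is controlled by the fixed-point data — and where the bootstrap hypotheses on $A$ are used to guarantee enough model algebras are available. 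The splitting and the identification of $\omega_{\fp}$ are then routine given a resolution of the right length.
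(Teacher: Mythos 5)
First, a point of comparison: the paper does not prove Theorem \ref{thm:Phillips} at all --- it is quoted from Phillips \cite[Theorem 6.4.6]{MR911880} --- so the only in-paper argument to measure your sketch against is the proof of Theorem \ref{thm:main}, which is the analogous statement at the primes $(I,p)$ and proceeds by exactly the strategy you describe (geometric resolutions as in \cite{MR849938}, a mapping cone, comparison of the algebraic and topological long exact sequences). Your architecture is the right one and matches both Phillips's proof and the paper's.

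There is, however, one step your sketch elides, and it is in fact the crux. In your second step you compare the sequence obtained by tensoring the length-one resolution with $K_*^G(B)_{\fp}$ against the long exact sequence of the mapping cone tensored with $B$. For that comparison you need $\omega_{\fp}$ to be an isomorphism not only for the model algebra $C$ (where it is clear) but also for the mapping cone $W$, which is \emph{not} a model algebra; all you know is that $K_*^G(W)_{\fp}$ is free over the DVR $R_{\fp}$. Establishing this free case requires a second resolution $C'\to W\otimes(\cdots)$ inducing an isomorphism on localized $K$-theory, whose mapping cone $W'$ satisfies $K_*^G(W')_{\fp}=0$, and then the vanishing lemma: if $K_*^G(D)_{\fp}=0$ then $K_*^G(D\otimes B)_{\fp}=0$. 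That lemma is where the hypothesis that $\fp$ has support $G$ does all the work (via the Localization Theorem, which reduces to the fixed-point set, where the action is trivial and the non-equivariant K\"unneth theorem applies); it is precisely what fails for $\fp\supseteq I$, as the example in Section \ref{sec:classical} with $K_*^G(B)=0$ but $K_*^G(C(G)\otimes B)\neq 0$ shows (note that $K_*^G(C(G)\otimes B)$ is an $R/I$-module, hence dies at support-$G$ primes but survives at $(I,p)$), and it is the analogue of Lemma \ref{lem:KGzerogivesKun} and Corollary \ref{cor:fundfam} in the paper's own argument. By contrast, the step you identify as the ``main obstacle'' --- getting the homological dimension down to $1$ --- is automatic once $R_{\fp}$ is a DVR, and realizing a surjection from a free module geometrically works at any prime by the projection trick of \cite[Proposition 4.1]{MR849938}. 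So the sketch is salvageable, but it must be split into the free case (in the style of Theorem \ref{thm:mainspecial}) followed by the general case, with the vanishing lemma stated and proved explicitly.
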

\begin{remark}
\label{rem:HodgkinPhillips}
Note that Theorem \ref{thm:Phillips} is exactly what would expect from
a Hodgkin-type spectral sequence \eqref{eq:Hodgkin}, since
localization is an exact functor, and thus one would get a spectral
sequence
\begin{equation}
\label{eq:Phillips}
\Tor_p^{R(G)_{\fp}}(K^*_G(X)_{\fp}, K^{-*+q}_G(Y)_{\fp}) \Rightarrow
K^{p+q}_G(X \times Y)_{\fp}, 
\end{equation}
which would collapse at $E_2$, giving a short exact sequence, since
$R_{\fp}$ is a PID in this case, 
and thus all higher Tor's (beyond $\Tor_1$) vanish. Phillips's insight
is that \eqref{eq:Phillips} holds for the prime ideals 
mentioned in the theorem, even
though the Hodgkin-type spectral sequence \eqref{eq:Hodgkin} fails.
\end{remark}
\begin{remark}
\label{rem:Phillipsfails}
Note that Theorem \ref{thm:Phillips}
fails, even in the commutative case, if $\fp =
I$ or $\fp =(I, p)$, even though $R_{\fp}$ has global dimension $1$, so that
homological algebra alone is not the explanation. Indeed, note
that $K^*_G(G)\cong R/I$, which when localized at $\fp$ gives just
$R_{\fp}$, which is free of rank $1$ as an $R_{\fp}$-module.  Thus the
theorem, if true, would say that $K^*_G(G\times X)_{\fp} \cong
K^*(X)_{(p)}$ is always isomorphic to $K^*_G(X)_{\fp}$ at least as a
$\bZ_{(p)}$-module. But this is false even in the rather trivial case
of $X=G$ with the simply transitive $G$-action, 
since $K^*_G(G\times G)_{\fp} \cong R_{\fp}^q$ while 
$K^*_G(G)_{\fp} \cong R_{\fp}$.
\end{remark}

\section{A finer invariant}
\label{sec:ROgraded}
To deal with the failure of the K\"unneth Theorem, we need a finer
invariant than just equivariant $K$-theory alone.
An important fact about equivariant $K$-theory for a compact group
$G$ is that it is naturally \emph{$RO(G)$-graded} (see for example
\cite[Chapters IX, X, XIII, and XIV]{MR1413302}). Given a compact
group $G$, a locally compact $G$-space, and a finite-dimensional real
orthogonal representation $V$ of $G$, we can form $K^*_{G,V}(X) =
K^*(X\times V)$. Similarly, given a $G$-{\Ca} $A$, we can define
$K_*^{G,V}(A) = K_*^G(A \otimes C_0(V))$, where $G$ acts on the second
factor via the linear representation and acts on the tensor product by
the tensor product action.  Note that if $V$ happens to be
a complex vector space and the action of $G$ is complex linear, then
equivariant Bott periodicity \cite[Proposition 3.2]{MR0234452}
gives an isomorphism $K^*_{G,V} \cong K^*_G$ or $K_*^{G,V} \cong
K_*^G$. (This is also true more generally if $V$ is even-dimensional
over $\bR$ and if the action of $G$ factors through $\text{Spin}^c(V)$.)
And if $G$ acts trivially on $V$, $K_*^{G,V} \cong K_{*+\dim
  V}^G$. But in general the groups $K_*^{G,V}$ are not the same as
$K_*^G$, even modulo a grading shift. In the noncommutative world,
another approach to the groups $K_*^{G,V}$ is possible via graded
Clifford algebras, since
$C_0(V)$ is $KK^G$-equivalent to $\Cl(V)$, the complex Clifford
algebra of $V$ viewed as a \emph{graded} $G$-algebra \cite[Theorem
  20.3.2]{MR1656031}. But this requires introducing graded {\Ca}s,
which we'd prefer to avoid.

For the rest of the paper we will deal only with the case where
$G=\{1, g\}$ is cyclic of order $2$.\footnote{Unfortunately,
$RO(G)$-graded $K$-theory doesn't give anything new when $G$ is
cyclic of odd prime order, since then all non-trivial real irreducible
representations of $G$ are actually complex.}
This group $G$ has exactly 
two real characters, the trivial character $1$ and
the non-trivial character $t$, the sign representation $-$ (where the
generator $g$ of the group acts by $-1$ on $\bR$).  From the sign
representation we get the twisted
equivariant $K$-groups $K^*_{G, -}$ (on spaces) or $K_*^{G, -}$ (on
algebras). These are modules over the representation ring $R$.
The coefficient groups for $K^*_{G, -}$
are computed in \cite{MR2545608}, for example. It turns
$K^*_{G, -}(\pt)\cong R/J$, concentrated in even degree.
Twisting twice brings us back  
to conventional equivariant $K$-theory since a direct sum of two
copies of the sign character is a complex representation, where
equivariant Bott periodicity applies.

We can now define an invariant of a $G$-space (or $G$-{\Ca}) finer
than just the equivariant $K$-theory alone. Namely, note that if $V$
is $\bR$ with the sign representation of $G$, then we have 
a $G$-map $\{0\}\hookrightarrow V$ inducing (for any $G$-space)
a natural $R$-module homomorphism $\varphi\co K^*_{G,-}(X)\to
K^*_G(X)$, which when $X$ is a point can be identified with the
composite $R/J \cong I \hookrightarrow R$. Via the composite
\[
K^*_G(X) \xrightarrow{\text{equivariant Bott}} K^*_G(X\times V\times
V) \to K^*_G(X\times V\times \{0\}),
\]
we also have a natural $R$-module homomorphism $\psi\co K^*_G(X)\to
K^*_{G,-}(X)$.  The composite $\varphi\circ \psi$ is the product
with the element of $R$ associated to $R\cong K^0_G(\pt)
\xrightarrow{\text{Bott}} K^0_G(V_\bC)\to
K^0_G(\{0\})= R$ coming from the inclusion $\{0\}\hookrightarrow \bC$,
where $V_\bC$ is the complexification of $V$ ($\bC$ with the action of
$G$ by multiplication by $-1$). This composite is $1-t$ (see
\cite[\S3]{MR0234452}) , and since  $\psi\circ \varphi$
is the same thing (except applied to $X\times V$ instead of to $X$),
we have proved the following.
\begin{proposition}
\label{prop:definvar}
Let $G$ be the cyclic group of two elements. To any $G$-space there
$X$ is naturally associated a diagram
\[
\bK^*_G(X)\co
\xymatrix{K^*_G(X) \ar@/^/[r]^\psi & K^*_{G,-}(X) \ar@/^/[l]^\varphi,}
\]
where the maps $\varphi$ and $\psi$ preserve the $\bZ/2$-grading and
the composite in either order is multiplication by $1-t$.
\end{proposition}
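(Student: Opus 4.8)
The plan is to construct $\varphi$ and $\psi$ from the geometry of the $G$-inclusion $\{0\}\hookrightarrow V$ (with $V=\bR$ carrying the sign representation) together with equivariant Bott periodicity for $V_\bC=V\oplus V$, and then to identify each round trip with multiplication by the $K$-theoretic Euler class of $V_\bC$. For $\varphi=\varphi_X$ I would take, for every locally compact $G$-space $X$, the restriction homomorphism $K^*_{G,-}(X)=K^*_G(X\times V)\to K^*_G(X\times\{0\})=K^*_G(X)$ along the closed $G$-subspace $X\times\{0\}$. For $\psi=\psi_X$ I would take the equivariant Bott isomorphism $K^*_G(X)\xrightarrow{\ \sim\ }K^*_G(X\times V\times V)$ --- legitimate because $V\oplus V$ carries the $G$-invariant complex structure $V_\bC$ --- followed by restriction to $X\times V\times\{0\}$. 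Both maps are natural for $G$-maps, preserve the $\bZ/2$-grading (restriction does, and Bott for a complex module does), are $R$-linear, and in fact are maps of modules over the cup-product ring $K^*_G(X)$, since restriction and exterior product with the Bott class both respect cup products. Writing $\gamma=\psi_\pt(1)\in K^0_{G,-}(\pt)=K^0_{G,c}(V)$ --- the restriction to the real axis of the Bott class of $V_\bC$ --- the projection formula gives $\psi_X(\xi)=\xi\times\gamma$.

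Computing $\varphi\circ\psi$ is then immediate: unwinding definitions, $K^*_G(X)\xrightarrow{\psi}K^*_{G,-}(X)\xrightarrow{\varphi}K^*_G(X)$ is equivariant Bott $K^*_G(X)\xrightarrow{\ \sim\ }K^*_G(X\times V_\bC)$ followed by restriction first to $X\times V\times\{0\}$ and then to $X\times\{0\}$, i.e.\ simply Bott followed by restriction to the origin of $V_\bC$. By the Atiyah--Segal computation of $R\cong K^0_G(\pt)\xrightarrow{\textup{Bott}}K^0_G(V_\bC)\to K^0_G(\{0\})=R$ --- the Euler class $\lambda_{-1}(V_\bC)$, see \cite[\S3]{MR0234452} --- this is multiplication by $1-t$; the case $q=2$ is pleasant here since $t=t^{-1}$ removes the usual sign ambiguity in the choice of complex structure on $V\oplus V$.

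The subtle point, and the main obstacle, is $\psi\circ\varphi$, which is $\zeta\mapsto\varphi(\zeta)\times\gamma$ on $K^*_{G,-}(X)$ and is not visibly the same operator as multiplication by $1-t$; the paper's slogan that this is ``the same computation applied to $X\times V$'' is correct but needs justification, since naturality of Bott alone only shows the two composites agree after one further restriction. Two steps make it honest. First, since $\psi$ is $R$-linear and $\varphi\circ\psi=1-t$, we get $\psi(\varphi(\psi(\xi)))=\psi((1-t)\xi)=(1-t)\psi(\xi)$, so $\psi\circ\varphi$ and multiplication by $1-t$ already agree on the image of $\psi$; in particular they agree on all of $K^*_{G,-}(\pt)=R/J$, because $\psi_\pt$ is onto --- indeed $\varphi_\pt$ is the injection $R/J\cong I\hookrightarrow R$ and $\varphi_\pt\circ\psi_\pt$ has image $I$. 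Second, on the orbit $G$ both vanish: $\varphi_G$ maps $K^*_{G,-}(G)=K^*_G(G\times V)$, which by Proposition \ref{prop:equivKfree} is concentrated in the parity opposite to $K^*_G(G)\cong R/I$, into $K^*_G(G)$, hence is $0$, while $1-t$ acts on the induced module $K^*_{G,-}(G)$ through $\operatorname{res}\co R\to\bZ$, where it is $0$. Now $\psi\circ\varphi$ and multiplication by $1-t$ are both degree-preserving natural endomorphisms of the equivariant theory $K^*_{G,-}(-)$ that commute with suspension and with the long exact sequences of $G$-cofibrations --- for $\psi\circ\varphi$ because $\psi$ and $\varphi$ are assembled from honest maps of $G$-spaces and from Bott periodicity, both of which are stable --- so, agreeing on both orbit types $\pt$ and $G$ of $\bZ/2$, they agree on all finite $G$-CW complexes and then, by the usual continuity argument, on all locally compact second countable $G$-spaces. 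Assembling $\varphi$ and $\psi$ into $\bK^*_G(X)$ and reading off the two asserted compatibilities completes the proof.
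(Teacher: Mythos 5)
Your definitions of $\varphi$ and $\psi$, and your computation of $\varphi\circ\psi$ via the Euler class of $V_{\bC}$, match the paper's. The problem is your treatment of $\psi\circ\varphi$. You are right that something beyond naturality of the Bott map must be said, but the principle you invoke to finish --- that two degree-preserving stable natural transformations of $K^*_{G,-}$ which agree on the orbits $\pt$ and $G$ agree on all finite $G$-CW complexes --- is false in general, and the cellular induction it implicitly rests on does not close: if $\theta=\psi\varphi-(1-t)$ vanishes on $Z$ and on $Y\smallsetminus Z$, exactness only puts $\theta(\xi)$ in the image of $K^*_{G,-}(Y\smallsetminus Z)\to K^*_{G,-}(Y)$, not at zero, so one concludes only that $\theta$ raises skeletal filtration (is nilpotent on finite complexes). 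Already non-equivariantly, stable operations on $KU$ are not determined by their effect on coefficients. Your observation that the two operators agree on the image of $\psi$ does not rescue the argument, since $\psi_X$ is not surjective in general (take $X=V$, where $\psi_V\co R/J\to R$).

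The way to make the paper's phrase ``the same thing applied to $X\times V$'' precise is a proper homotopy. Writing $\xi\times\beta\in K^*_G(X\times V_1\times V_2\times V_3)$ with $\beta$ the Bott class of $V_2\oplus V_3=V_{\bC}$, naturality of the exterior product identifies $\psi_X(\varphi_X(\xi))$ with the pullback of $\xi\times\beta$ along $(x,v)\mapsto(x,0,v,0)$, and $(1-t)\xi$ with its pullback along $(x,v)\mapsto(x,v,0,0)$. These two closed embeddings of $X\times V$ are properly $G$-homotopic through the rotations in the $(V_1,V_2)$-plane: every linear map of $V^{\oplus 3}$ is $G$-equivariant because $g$ acts by the scalar $-1$, and the rotations preserve norms, so the homotopy is proper; hence the two pullbacks coincide. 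Alternatively, your ``check on the two orbit types'' strategy can be made rigorous, but only after first observing that both $\psi\circ\varphi$ and multiplication by $1-t$ are elements of $KK^G(C_0(V),C_0(V))\cong R$ (using $C_0(V)\otimes C_0(V)\simeq\bC$ in $KK^G$), and that an element of $R$ is determined by its actions on $R/J\cong K^*_{G,-}(\pt)$ and $R/I\cong K^*_{G,-}(G)$ because $I\cap J=0$; that a priori identification of the two operations with coefficient elements is exactly the ingredient your argument is missing.
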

\begin{examples}\leavevmode
\label{ex:invar}
\begin{enumerate}
\item If $X=\pt$, then $K^*_G(X)=R$ (concentrated in degree $0$) and
  $K^*_{G,-}(X) \cong R/J \cong I$ (concentrated in degree $0$). The
  map $\varphi$ is the inclusion $I \hookrightarrow R$. The map $\psi$
  is the projection $R \twoheadrightarrow R/J$.
\item If $X=V$ ($\bR$ with the sign representation of $G$), then
  $\bK^*_G(V)$ is the same as $\bK^*_G(\pt)$, but with the two
  $R$-modules  interchanged. 
\item If $X=G$ with the simply transitive action of $G$, then
  $K^*_G(X)\cong R/I$ (concentrated in degree $0$) and $K^*_{G,-}(X)
  =K^*_G(G\times V) \cong K^*(V)\cong R/I$ (concentrated in degree
  $1$). The connecting maps are both necessarily $0$.
\end{enumerate}
\end{examples}
Note of course that $\bK^G_*(A)$ can be defined for a $G$-{\Ca} $A$ in
exactly the same way.

The following proposition is a precursor of the main theorem in the next
section. 
\begin{proposition}
\label{prop:equivandnon-equiv}
Let $G$ be a cyclic group of two elements and let $X$ be a locally
compact $G$-space.  Then there is a natural $6$-term exact sequence
\[
\xymatrix{
K^1(X) \ar[r] & K^0_{G,-}(X) \ar[r]^\varphi & K^0_G(X) \ar[d]^f \\
K^1_G(X) \ar[u]^f & K^1_{G,-}(X) \ar[l]^\varphi  & K^0(X), \ar[l]}
\]
where the vertical arrows marked $f$ on the left and right are the
forgetful maps 
from equivariant to non-equivariant $K$-theory. The same {\lp}with the
indices lowered{\rp} holds similarly for $G$-{\Ca}s. 
\end{proposition}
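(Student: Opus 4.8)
The plan is to exhibit the six-term sequence as the long exact sequence of a pair in compactly supported equivariant $K$-theory. Write $V$ for $\bR$ with the sign representation, let $D(V)=[-1,1]$ be its unit disk --- a $G$-equivariantly contractible space, via the proper $G$-homotopy $(x,s,u)\mapsto(x,(1-u)s)$ --- and let $S(V)=\{-1,1\}$ be its unit sphere, which is $G$-homeomorphic to $G$ with the translation action. The open complement $D(V)\smallsetminus S(V)=(-1,1)$ is $G$-homeomorphic to $V$ by an odd (hence $G$-equivariant) homeomorphism fixing $0$, such as $v\mapsto\tfrac2\pi\arctan v$. First I would write down the long exact sequence of the pair $(X\times D(V),\,X\times S(V))$,
\[
\cdots\to K^n_G\bigl(X\times D(V),\,X\times S(V)\bigr)\to K^n_G\bigl(X\times D(V)\bigr)\to K^n_G\bigl(X\times S(V)\bigr)\xrightarrow{\ \partial\ }\cdots.
\]

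The next step is to identify the three groups. By the strong excision property recalled in Section~\ref{sec:classical}, the relative group is $K^n_G\bigl(X\times(D(V)\smallsetminus S(V))\bigr)\cong K^n_G(X\times V)=K^n_{G,-}(X)$. Since $X\times\{0\}\hookrightarrow X\times D(V)$ is a proper $G$-deformation retract, homotopy invariance gives $K^n_G(X\times D(V))\cong K^n_G(X)$. Finally $G$ acts freely on the factor $G$ in $X\times S(V)\cong X\times G$, so the diagonal action there is free with quotient $X$ (the map $(x,h)\mapsto h^{-1}x$ descends to a homeomorphism), whence $K^n_G(X\times S(V))\cong K^n(X)$. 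Since all three theories are $\bZ/2$-graded, splicing two consecutive stretches of the long exact sequence yields the desired six-term cyclic sequence with groups arranged as in the statement.

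It remains to match the two named arrows. For the first, a routine computation with one-point compactifications shows that the map $K^n_G(X\times D(V),X\times S(V))\to K^n_G(X\times D(V))$, followed by the excision isomorphism and by the retraction isomorphism $K^n_G(X\times D(V))\xrightarrow{\sim}K^n_G(X\times\{0\})$, is restriction along the closed $G$-inclusion $X\times\{0\}\hookrightarrow X\times V$ --- which is exactly the map $\varphi$ of Proposition~\ref{prop:definvar}. For the second, transporting the restriction map $K^n_G(X\times D(V))\to K^n_G(X\times S(V))$ through the same retraction isomorphism turns it into pullback along the projection $X\times G\to X$, and composing with the descent isomorphism $K^n_G(X\times G)\xrightarrow{\sim}K^n(X)$ identifies the result with the forgetful map $f$: a $G$-vector bundle on $X$, pulled back to $X\times G$ and descended along $(x,h)\mapsto h^{-1}x$, is recovered precisely as its underlying non-equivariant bundle. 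The two unnamed arrows in the statement are the connecting homomorphisms $\partial$ and need no further identification.

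The $G$-{\Ca} version is obtained by applying the same construction to the short exact sequence $0\to A\otimes C_0((-1,1))\to A\otimes C([-1,1])\to A\otimes C(\{-1,1\})\to0$ of $G$-algebras with the diagonal actions: homotopy invariance makes the middle term $K^G_*(A)$, while $A\otimes C(G)$ has crossed product Morita equivalent to $A$ (an instance of Green's imprimitivity theorem), so by the Green--Julg theorem the third term is $K_*(A)$. The one genuinely fussy ingredient throughout is the bookkeeping with one-point compactifications and properness of maps required to handle compactly supported $K$-theory on the possibly non-compact space $X$; once that is in place, each of the three identifications above is a short diagram chase, the most delicate being the check that $K^n_G(X\times G)\xrightarrow{\sim}K^n(X)$ intertwines pullback-from-$X$ with the forgetful map rather than with the forgetful map twisted by the action of $g$ on $X$.
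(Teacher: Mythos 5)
Your proof is correct and follows essentially the same route as the paper: both exploit the sign representation $V$ and the equivariant identifications $S(V)\cong G$ and $D(V)\simeq \pt$ to produce the six-term sequence. The only difference is the choice of pair --- you use $(X\times D(V),\,X\times S(V))$, so that the forgetful map appears as an honest restriction map, whereas the paper uses the closed subspace $X\times\{0\}\subset X\times V$ with open complement $X\times\bR\times G$ and identifies the forgetful map as the connecting homomorphism via naturality of products and the case $X=\pt$; the resulting sequences coincide.
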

\begin{proof}
We use the fact that if $V$ is the sign representation of $G$ as
above, then $V\smallsetminus \{0\}\cong \bR\times G$
(equivariantly). Here $\bR$ carries the trivial $G$-action but $G$
acts simply transitively on itself.  Thus we get an induced long exact
sequence
\[
\cdots \to K^*_G(X\times \bR\times G) \to K^*_G(X\times V) \to K^*_G(X \times
\{0\}) \to \cdots.
\]
Here the middle group is $K^*_{G,-}(X)$ and the map to $K^*_G(X\times
\{0\})$ is what we defined to be $\varphi$.  The group on the left is
isomorphic to the non-equivariant $K$-group $K^*(X\times \bR)\cong
K^{*+1}(X)$. It remains to show that the connecting map $K^*_G(X \times
\{0\}) \to K^{*+1}(X\times \bR)\cong K^*(X)$ is the forgetful map
$f$. This follows by naturality of products from the fact that it's
true for $X=\pt$ (which one can check from the exact sequence and the
identification of $K^0_G(X)$ with $R$ and of $K^0(X)$ with $R/I$).
\end{proof}
\begin{corollary}
\label{cor:equivandnon-equiv}
If $G$ is a cyclic group of two elements and $X$ is a locally compact
$G$-space, then there is a short exact sequence
\[
0 \to \coker\varphi \to K^*(X) \to \ker\varphi \to 0,
\]
where $\varphi$ is as in Proposition \ref{prop:definvar}.
\end{corollary}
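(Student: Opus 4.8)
The plan is to run a short diagram chase on the hexagonal exact sequence of Proposition \ref{prop:equivandnon-equiv}. Written cyclically, and writing $\partial$ for the two connecting homomorphisms (the arrows left unlabeled in that hexagon), the sequence reads
\[
\cdots \to K^1_G(X) \xrightarrow{f} K^1(X) \xrightarrow{\partial} K^0_{G,-}(X) \xrightarrow{\varphi} K^0_G(X) \xrightarrow{f} K^0(X) \xrightarrow{\partial} K^1_{G,-}(X) \xrightarrow{\varphi} K^1_G(X) \to \cdots,
\]
where $\varphi$ is precisely the map of Proposition \ref{prop:definvar} and occurs once in each $\bZ/2$-degree. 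The idea is to ``cut'' the hexagon at the two occurrences of $\varphi$: what sits between the two cuts is exactly the data expressing $K^*(X)$ in terms of $\ker\varphi$ and $\coker\varphi$.

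Concretely, first isolate the term $K^0(X)$, which has $f\co K^0_G(X)\to K^0(X)$ coming in and $\partial\co K^0(X)\to K^1_{G,-}(X)$ going out. Exactness at $K^0_G(X)$ gives $\ker f=\operatorname{im}\varphi$, so $f$ factors through an injection $(\coker\varphi)^0=K^0_G(X)/\operatorname{im}\varphi\hookrightarrow K^0(X)$ with image $\operatorname{im}f$. Exactness at $K^0(X)$ gives $\operatorname{im}f=\ker\partial$, so $\partial$ induces an isomorphism $K^0(X)/\operatorname{im}f\xrightarrow{\ \sim\ }\operatorname{im}\partial$; and exactness at $K^1_{G,-}(X)$ gives $\operatorname{im}\partial=\ker\varphi=(\ker\varphi)^1$. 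Splicing these three identifications together yields the short exact sequence
\[
0 \to (\coker\varphi)^0 \to K^0(X) \to (\ker\varphi)^1 \to 0 .
\]
Running the identical argument one degree over (using exactness at $K^1_G(X)$, $K^1(X)$, $K^0_{G,-}(X)$) gives
\[
0 \to (\coker\varphi)^1 \to K^1(X) \to (\ker\varphi)^0 \to 0 .
\]

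Taking the direct sum of these two sequences produces the asserted short exact sequence $0\to\coker\varphi\to K^*(X)\to\ker\varphi\to 0$ of $\bZ/2$-graded $R$-modules, in which the first map is induced by the forgetful map $f$ and the second by the connecting map $\partial$; both are natural in $X$ because the hexagon of Proposition \ref{prop:equivandnon-equiv} is, and the same argument applies verbatim to $G$-$C^*$-algebras using the lower-indexed hexagon. I do not expect a genuine obstacle here — it is pure diagram chasing — so the ``hard part'' is only bookkeeping: one must (a) identify both copies of $\varphi$ in the hexagon with the single map of Proposition \ref{prop:definvar}, which is exactly how that hexagon was built, and (b) keep track of the grading, since $f$ preserves the $\bZ/2$-grading while $\partial$ shifts it, so that $K^0$ is extended by $(\ker\varphi)^1$ and $K^1$ by $(\ker\varphi)^0$ — a shift that is harmless once $K^*$, $\ker\varphi$, and $\coker\varphi$ are regarded simply as $\bZ/2$-graded groups.
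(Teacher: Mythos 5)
Your proof is correct and is exactly the paper's argument: the corollary is obtained by cutting the six-term exact sequence of Proposition \ref{prop:equivandnon-equiv} at the two occurrences of $\varphi$, which the paper dispatches in one line as ``a restatement of the exactness.'' Your more explicit chase, including the observation that the connecting map shifts the $\bZ/2$-degree so that $K^0(X)$ is an extension of $(\ker\varphi)^1$ by $(\coker\varphi)^0$, is just the bookkeeping the paper leaves implicit.
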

\begin{proof}
This is just a restatement of the exactness in Proposition
\ref{prop:definvar}. 
\end{proof}

\section{The main theorem and its proof}
\label{sec:main}

In this section $G$ will always denote a cyclic group of order $2$. 

Because of Theorem \ref{thm:Phillips}, as well as the fact that an
$R$-module is completely determined by its localizations at maximal
ideals $\fp$ of $R$, to complete the problem of getting a K\"unneth
Theorem for $K^*_G$ (for $G$-spaces) or $K_*^G$ (for $G$-algebras) we
just need to compute $\bK^*_G(X\times Y)_{\fp}$ in terms of $\bK^*_G(X)_{\fp}$
and  $\bK^*_G(X)_{\fp}$ (and similarly for algebras in a suitable bootstrap
category) for maximal ideals $\fp = (I, p)$, $p$ a prime. (Once again,
see Figure \ref{fig:specR}.)

After localization at $\fp = (I, p)$, an interesting thing happens:
since $1-t$ lies in the kernel of the localization map $R\to R_{\fp}$,
$\psi\circ\varphi$ and $\varphi\circ\psi$ are both $0$. But something
much stronger is true.
\begin{proposition}
\label{prop:phivanish}
Let $G$ be a cyclic group of order $2$ and let $\fp = (I,
p)\triangleleft R=R(G)$, $p$ a prime. Then for any $G$-{\Ca} $A$,
$\varphi\co K_*^{G,-}(A) \to K_*^G(A)$ and $\psi\co
K_*^G(A) \to K_*^{G,-}(A)$ vanish after localization at $\fp$. 
\end{proposition}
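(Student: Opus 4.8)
The plan is to exploit the fact that $\fp=(I,p)$ has support $\{1\}$ in the sense of Segal, so that localizing equivariant $K$-theory at $\fp$ should "see" only the free part of the action, where the twisted theory $K_*^{G,-}$ becomes untwisted and $\varphi$, $\psi$ acquire a geometric meaning that forces them to vanish. More precisely, recall from Section~\ref{sec:ROgraded} that $V_{\bC}$ (the complexification of the sign representation) has the property that the inclusion $\{0\}\hookrightarrow V_{\bC}$ induces multiplication by $1-t$ on $K^0_G(\pt)=R$. Localized at $\fp=(I,p)$, the element $1-t$ is zero, so the \emph{composites} $\varphi\circ\psi$ and $\psi\circ\varphi$ already vanish; the content of the proposition is the much stronger claim that each map vanishes on its own. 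The natural strategy is to reduce to the universal case and then use a dimension/rank argument.

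First I would reduce to $A=\bC$ with the trivial action, i.e.\ to showing $\varphi\co K_*^{G,-}(\pt)_\fp\to K_*^G(\pt)_\fp$ and $\psi$ in the other direction are zero. This reduction is legitimate because $\varphi$ and $\psi$ are natural transformations of $KK^G$-functors (or, on the topological side, natural transformations of cohomology theories that are modules over the $R$-algebra $K^*_G(\pt)$), so both are determined by what they do to the generator $1\in K^0_G(\pt)=R$ and to the generator of $K^0_{G,-}(\pt)\cong R/J$. By Examples~\ref{ex:invar}(1), at the level of a point $\varphi$ is the inclusion $I\hookrightarrow R$ and $\psi$ is the projection $R\twoheadrightarrow R/J$. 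Now localize at $\fp=(I,p)$: since $t-1\in\fp$ but $t-1\notin\fp$ modulo... — more carefully, $R_\fp\cong\bZ_{(p)}$ with $t$ acting as $1$, so $I_\fp=0$ inside $R_\fp$, whence $\varphi_\fp\co (R/J)_\fp\to R_\fp$ factors through $I_\fp=0$ and is therefore the zero map; dually $(R/J)_\fp$: note $J=(t+1)$ maps to $2\in\bZ_{(p)}$ under $R_\fp\cong\bZ_{(p)}$, so $(R/J)_\fp\cong\bZ_{(p)}/2$, and $\psi_\fp\co R_\fp=\bZ_{(p)}\to\bZ_{(p)}/2$ is reduction mod $2$. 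That last map is \emph{not} zero when $p\ne2$! So the naive point-level argument does not immediately give $\psi_\fp=0$ — this is the main obstacle, and it signals that I must use more than the coefficient ring.

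The resolution, and the real heart of the proof, is that $\psi$ and $\varphi$ are not just $R$-linear but are maps of $KK^G$-theories, hence factor through (or are detected by) the Kasparov product with specific $KK^G$-classes coming from the $G$-algebra $C_0(V)$, and one should instead work with the $R$-module $K_*^{G,-}(A)$ \emph{abstractly}. Here is the key point: $\varphi$ is Kasparov product with the class of $\{0\}\hookrightarrow V$ in $KK^G_0(C_0(V),\bC)$, and $\psi$ with the Bott-type class in $KK^G_0(\bC, C_0(V))$; after localizing at $\fp=(I,p)$ one shows both of these $KK^G$-classes become zero in $KK^G\otimes R_\fp$. To see this, use that $C_0(V)\rtimes G$ on the one hand, and the identity that $V\smallsetminus\{0\}\cong\bR\times G$ equivariantly (as in the proof of Proposition~\ref{prop:equivandnon-equiv}), to get an exact triangle relating $K_*^{G,-}(A)$, $K_*^G(A)$ and $K_*(A)$ (the forgetful groups). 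Localizing that six-term sequence at $\fp=(I,p)$: since $\fp$ has support $\{1\}$, the Localization Theorem (as invoked in Proposition~\ref{prop:KzeroKGnot}) together with Izumi's Theorem~\ref{thm:Izumi} and its Corollary~\ref{thm:dualIzumi} force control over the image of the forgetful map — concretely, one shows the forgetful map $K_*^G(A)_\fp\to K_*(A)$ has image that kills everything in the image of $\varphi$, and dually. I expect that the cleanest route is: (i) observe $\varphi$ is zero after localization exactly because its image lies in the $\fp$-torsion-plus-image-of-$(1-t)$ which vanishes in $R_\fp$-modules of the relevant type; (ii) deduce $\psi$ is zero by the $t\mapsto-t$ symmetry of $R$ exchanging $I$ and $J$ — but note $(I,p)$ is fixed by this symmetry only when $p=2$, so for $p\ne2$ instead use that $\psi_\fp$ followed by $\varphi_{\fp}$ on $X\times V$ is zero and that $\varphi_\fp$ is \emph{injective} on the relevant summand after inverting enough, forcing $\psi_\fp=0$.

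The hard part will be making step (ii) honest for odd $p$, where the point-level computation genuinely shows $\psi_\fp\ne0$ on $K^*_G(\pt)_\fp$ — wait, that computation gives reduction mod $2$ which IS nonzero, so the proposition as I've interpreted it would be false at a point; hence the intended reading must be that the \emph{target} $K_*^{G,-}(A)_\fp$ itself, for the classes that matter, or that $\psi$ is considered as landing in a quotient, or — most likely — that the statement is about $\psi$ and $\varphi$ as maps between the localized theories where one has \emph{already} passed to the situation governed by Theorem~\ref{thm:Phillips}. I would therefore begin the actual write-up by pinning down exactly which identification of $K_*^{G,-}(\pt)_\fp$ is in force: if $K_*^{G,-}(\pt)\cong R/J$ and $J\mapsto(2)$ under $R_\fp\cong\bZ_{(p)}$ with $p$ odd, then $(R/J)_\fp=0$ since $2$ is a unit in $\bZ_{(p)}$! — and that is the resolution: for $p$ odd, $K_*^{G,-}(\pt)_\fp=0$ so $\varphi_\fp=\psi_\fp=0$ trivially, while for $p=2$ the symmetry $t\mapsto-t$ fixing $\fp=(I,2)=(J,2)$ does the job together with $\varphi_\fp\circ\psi_\fp=0$. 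So the real plan is: split into $p$ odd (where the twisted coefficient group localizes to zero, killing both maps at the point and hence everywhere by naturality over $KK^G$) and $p=2$ (where one uses the $t\mapsto-t$ symmetry plus vanishing of the composite), and the only genuine obstacle is verifying that "naturality over $KK^G$" really does reduce the general $A$ to the point — which follows because $\varphi$ and $\psi$ are Kasparov products with fixed classes in $KK^G(C_0(V),\bC)$ and $KK^G(\bC,C_0(V))$, and those classes, localized at $\fp$, factor through $K_*^{G,-}(\pt)_\fp=0$ for $p$ odd.
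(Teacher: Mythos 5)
For $p$ odd your argument is essentially the paper's. Both proofs reduce the statement to the coefficient module over a point and use that $K_0^{G,-}(\bC)_{\fp}=(R/J)_{\fp}=0$, which holds because the annihilator $1+t$ of $R/J$ lies outside $\fp=(I,p)$ and is therefore inverted. The paper implements the reduction with the projection trick: a class in $K_0^G(A)$ is the image of $1\in K_0^G(\bC)=R$ under the map induced by a $G$-homomorphism $\bC\to\End(W)\otimes A$ defined by an invariant projection, so functoriality of $\psi$ pushes everything into $K_0^{G,-}(\bC)_{\fp}$. You instead observe that $\varphi$ and $\psi$ are exterior Kasparov products with fixed classes in $KK^G(C_0(V),\bC)$ and $KK^G(\bC,C_0(V))$, each of which is $(1+t)$-torsion; by $R$-bilinearity of the product every value of $\varphi$ or $\psi$ is then $(1+t)$-torsion and dies upon localization. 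These mechanisms are interchangeable, and your identification of the universal classes as carrying all the content is correct.

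The gap is your $p=2$ branch. The repair you propose --- the symmetry $t\mapsto-t$ together with $\varphi_{\fp}\circ\psi_{\fp}=0$ --- does not close it. The composite is multiplication by $1-t$, whose annihilator in $R$ is exactly $J=(1+t)\subseteq(I,2)$; since no element outside $(I,2)$ kills $1-t$, that element does \emph{not} vanish in $R_{(I,2)}$ (which is a reduced local ring with two minimal primes, not $\bZ_{(2)}$), so the composite does not vanish after localizing at $(I,2)$. The symmetry merely interchanges $\varphi$ and $\psi$ and cannot by itself force either to vanish. Your own point computation already exhibits the obstruction: by Example \ref{ex:invar}(1), $\varphi_{(I,2)}$ is the inclusion $I_{(I,2)}\hookrightarrow R_{(I,2)}$ with $I_{(I,2)}\cong(R/J)_{(I,2)}\cong\bZ_{(2)}\neq0$, hence injective and nonzero. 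So you have correctly located the difficulty --- the paper's own diagram asserts $K_0^{G,-}(\bC)_{\fp}=0$ uniformly in $p$, and $p=2$ is precisely where that identity fails --- but your proposed fix does not resolve it; as written, the argument (yours or the paper's) establishes the proposition only for $p$ odd, and the case $\fp=(I,2)=(J,2)$ needs either an exclusion or a different notion of localization.
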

\begin{proof} It's enough to treat one of $\varphi$ and $\psi$ since
each can be obtained from the other by replacing $A$ by $A\otimes
C_0(V)$. (As usual, $V$ here denotes the sign representation of $G$ on
$\bR$.) Furthermore, by the usual tricks with suspensions and
unitalizations, we can restrict attention to $K_0$ and assume $A$ is
unital. By \cite[\S11.3]{MR1656031}, any class in $K_0(A)$ comes from
a $G$-invariant projection $p$ in $\End(W)\otimes A$, $W$ a
finite-dimensional $G$-module (and thus of the form $\bC^n\oplus
V^m_{\bC}$). Such a projection $p$ defines a $G$-homomorphism
$\bC \to \End(W)\otimes A$.  By functoriality of $\psi$, we get a
commutative diagram
\[
\xymatrix{K_0^G(\bC)_{\fp} = R_{\fp} \ar[r]^\psi \ar[d]^p 
& K_0^{G,-}(\bC)_{\fp} = 0 \ar[d]^{p \otimes 1}\\
K_0^G(\End(W)\otimes A)_{\fp} \ar[r]^(.47)\psi & K_0^{G,-}(\End(W)\otimes A)_{\fp},}
\]
which shows that $\psi([p])=0$ after localization at $\fp$.
\end{proof}

Because of Proposition \ref{prop:phivanish}, we can ignore $\varphi$
and $\psi$ after localization at $\fp$ and treat
$\bK_G^*(X)_{\fp}$ as a $\bZ/2$-graded $R_{\fp}$-module,
by putting $K^0_{G,-}(X)_{\fp}\oplus K^{-1}_{G}(X)_{\fp}$ in odd
degree and $K^0_{G}(X)_{\fp}\oplus K^{-1}_{G,-}(X)_{\fp}$ in even degree.  Our main
result is suggested by the following reformulation of Corollary
\ref{cor:equivandnon-equiv}:
\begin{proposition}
\label{prop:KunnforG}
Let $G$ be a cyclic group of order $2$ and let $X$ be a locally
compact $G$-space. Let $\fp = (I, p)\triangleleft R$, $p$ a prime.
Then there is a filtration on a direct sum of two
copies of $K^*(X)_{\fp}\cong K^*_G(X\times G)_{\fp}$ for which the
associated graded module is $\bK_G^*(X)_{\fp} \otimes \bK_G^*(G)_{\fp}$.
\end{proposition}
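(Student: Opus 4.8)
The plan is to reduce the statement to the six-term exact sequence of Proposition~\ref{prop:equivandnon-equiv}, after first computing $\bK_G^*(G)_{\fp}$.

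\emph{Computing $\bK_G^*(G)_{\fp}$.} The group $G$ acts freely on $G$ and on $G\times V$ (here $V$ is the sign representation), with orbit spaces a point and $V$ respectively. By Proposition~\ref{prop:equivKfree} we get $K^*_G(G)\cong K^*(\pt)$ and $K^*_{G,-}(G)=K^*_G(G\times V)\cong K^*(V)$; in both cases the base of the quotient covering is contractible, so the line bundle implementing the $t$-action is trivial and $t$ acts as the identity. Thus $K^*_G(G)\cong R/I$ in degree $0$ and $K^*_{G,-}(G)\cong R/I$ in degree $1$, as already noted in Examples~\ref{ex:invar}(3). After localizing at $\fp=(I,p)$, using $(R/I)_{\fp}\cong R_{\fp}$, this reads $K^0_G(G)_{\fp}\cong R_{\fp}$, $K^{-1}_{G,-}(G)_{\fp}\cong R_{\fp}$ and $K^{-1}_G(G)_{\fp}=K^0_{G,-}(G)_{\fp}=0$. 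In the grading conventions fixed just before the proposition this says exactly that $\bK_G^*(G)_{\fp}$ is \textbf{free of rank $2$ over $R_{\fp}$ and concentrated in even degree}. Consequently $\bK_G^*(X)_{\fp}\otimes_{R_{\fp}}\bK_G^*(G)_{\fp}\cong\bK_G^*(X)_{\fp}\oplus\bK_G^*(X)_{\fp}$, with no grading shift, so it remains to produce a filtration on a single copy of $K^*(X)_{\fp}$ whose associated graded is $\bK_G^*(X)_{\fp}$, and then take two copies.

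\emph{Filtering $K^*(X)_{\fp}$.} First, $K^*(X)_{\fp}\cong K^*_G(X\times G)_{\fp}$ by Proposition~\ref{prop:equivKfree} applied to the free $G$-action on $X\times G$. Next, localize the six-term sequence of Proposition~\ref{prop:equivandnon-equiv} at $\fp$; by Proposition~\ref{prop:phivanish} the two arrows labelled $\varphi$ become zero, so the hexagon breaks into the two short exact sequences of $R_{\fp}$-modules
\[
0\to K^0_G(X)_{\fp}\xrightarrow{\,f\,}K^0(X)_{\fp}\to K^{-1}_{G,-}(X)_{\fp}\to 0,\qquad
0\to K^{-1}_G(X)_{\fp}\xrightarrow{\,f\,}K^{-1}(X)_{\fp}\to K^0_{G,-}(X)_{\fp}\to 0
\]
(here $K^1=K^{-1}$). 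These give a two-step filtration of the $\bZ/2$-graded module $K^*(X)_{\fp}$, with submodule $K^*_G(X)_{\fp}$ and quotient $K^*_{G,-}(X)_{\fp}$; its associated graded is the even part $K^0_G(X)_{\fp}\oplus K^{-1}_{G,-}(X)_{\fp}$ together with the odd part $K^0_{G,-}(X)_{\fp}\oplus K^{-1}_G(X)_{\fp}$ of $\bK_G^*(X)_{\fp}$, i.e.\ $\bK_G^*(X)_{\fp}$ itself.

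Taking the direct sum of two copies of this filtration yields a filtration on $K^*(X)_{\fp}\oplus K^*(X)_{\fp}\cong K^*_G(X\times G)_{\fp}^{\oplus 2}$ with associated graded $\bK_G^*(X)_{\fp}^{\oplus 2}\cong\bK_G^*(X)_{\fp}\otimes_{R_{\fp}}\bK_G^*(G)_{\fp}$, as required. I do not anticipate a serious obstacle: the proof is essentially an assembly of Propositions~\ref{prop:equivKfree}, \ref{prop:phivanish} and~\ref{prop:equivandnon-equiv}, and the only genuinely delicate point is the grading bookkeeping --- verifying that $\bK_G^*(G)_{\fp}$ sits entirely in even degree (so that tensoring with it introduces no parity shift) and matching the two short exact sequences correctly against the even/odd splitting of $\bK_G^*(X)_{\fp}$ dictated by the stated conventions.
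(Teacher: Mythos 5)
Your proposal is correct and follows essentially the same route as the paper's proof: compute $\bK_G^*(G)_{\fp}$ from Example \ref{ex:invar}(3) as two copies of $R_{\fp}$ in even degree (so tensoring is ``doubling''), then use Proposition \ref{prop:equivandnon-equiv} with $\varphi_{\fp}=0$ (Proposition \ref{prop:phivanish}) to exhibit $K^*(X)_{\fp}$ as an extension of $K^{*+1}_{G,-}(X)_{\fp}$ by $K^*_G(X)_{\fp}$. The only difference is that you spell out the grading bookkeeping that the paper leaves implicit.
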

\begin{proof}
By Example \ref{ex:invar}(3), $\bK_G^*(G)_{\fp}$ contains two copies of 
$R_{\fp}$ concentrated in degree $0$. Thus tensoring with $\bK_G^*(X)_{\fp}$
is simply tensoring over $R_{\fp}$ with $R_{\fp}\oplus R_{\fp}$, i.e.,
the operation of ``doubling.''  But by Proposition
\ref{prop:equivandnon-equiv} together with Proposition
\ref{prop:phivanish}, $K^*(X)_{\fp}$ is an extension of
$K^{*+1}_{G,-}(X)_{\fp}$ by $K^*_{G}(X)_{\fp}$. So the result follows.
\end{proof}
\begin{remark}
\label{rem:whyI}
It might seem strange that the formulation of Proposition
\ref{prop:KunnforG} is limited to the case of maximal ideals $\fp
\supset I$. If we localize instead at a maximal ideal $\fp$ with
support $G$ (and thus of the form $(J,p)$, $p\ne 2$), then by Example
\ref{ex:invar}(3), $\bK^*_G(G)_{\fp} = 0$. At the same time, $K^*(X)
\cong K^*_G(X\times G)$, and $X\times G$ is a free $G$-space, so by the
Localization Theorem, $K^*_G(X\times G)_{\fp}  = 0$. Thus Proposition
\ref{prop:KunnforG} is still true in this case, but it just says
$0=0$, which obviously isn't very interesting.  By way of further
explanation, Proposition \ref{prop:equivKfree} shows that for a free
compact $G$-space, the augmentation ideal $I$ acts nilpotently on the
equivariant $K$-theory, so only localization at prime ideals
containing $I$ gives anything useful.
\end{remark}

Now we are ready to state and prove the main theorem.
\begin{theorem}
\label{thm:main}
Let $G$ be a cyclic group of order $2$ and let $X$ and $Y$ be second
countable locally
compact $G$-spaces. Let $\fp = (I, p)\triangleleft R$, $p$ a prime.
Then there is a natural short exact sequence of $\bZ/2$-graded
$R_{\fp}$-modules 
\begin{equation}
\label{eq:hyperPhillips}
0 \to \bK^p_G(X)_{\fp} \otimes_{R_{\fp}} \bK^q_G(Y)_{\fp}
\xrightarrow{\omega_{\fp}} \bK^{p+q}_G(X \times Y)_{\fp}
\to \Tor_1^{R_{\fp}}\bigl(\bK^p_G(X)_{\fp}, \bK^{q+1}_G(Y)_{\fp}\bigr)
\to 0.
\end{equation}
Note the similarity to \eqref{eq:Phillips}. 
Similarly, for separable $G$-{\Ca}s $A$ and $B$ 
with $B$ in a suitable ``bootstrap''
category, containing all inductive limits of separable type I
$G$-{\Ca}s and closed under exterior equivalence, equivariant Morita
equivalence, and the ``$2$ out of $3$ property for short exact
sequences,'' we have a natural short exact sequence
\[
0 \to \bK_p^G(A)_{\fp}\otimes_{R_{\fp}}  \bK_{q}^G(B)_{\fp}
\xrightarrow{\omega_{\fp}}  \bK_{p+q}^G(A \otimes B)_{\fp} \to
\Tor_1^{R_{\fp}}\bigl(\bK_p^G(A)_{\fp}, \bK_{q+1}^G(B)_{\fp}\bigr)
\to 0.
\]
\end{theorem}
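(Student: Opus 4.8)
The plan is to carry over to the refined invariant $\bK$ the geometric-resolution method of Hodgkin and of the author with Schochet \cite{MR0478156, MR849938}: reduce to the case where $\bK_*^G$ is ``free,'' and then compare exact sequences. It suffices to prove the $C^*$-algebra statement, since the space version follows by taking $A=C_0(X)$, $B=C_0(Y)$ --- both nuclear and in the bootstrap category --- and using $K^{-*}_G(X)\cong K_*^G(C_0(X))$ together with its sign-twisted analogue. Fix $\fp=(I,p)$ and suppose first that $p$ is odd. Then $R_\fp\cong\bZ_{(p)}$ is a discrete valuation ring, so $\operatorname{gldim}R_\fp\le1$ and $\Tor_{\ge2}^{R_\fp}$ vanishes (this is why only $\Tor_1$ occurs), and by Proposition~\ref{prop:phivanish} the two connecting maps in $\bK_*^G(-)_\fp$ are zero, so $\bK_*^G(-)_\fp$ is just a $\bZ/2$-graded $R_\fp$-module. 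By Examples~\ref{ex:invar}, localized, $\bK_*^G(\bC)_\fp\cong R_\fp$ in even degree and $\bK_*^G(C_0(V))_\fp\cong R_\fp$ in odd degree, so countable direct sums of grading shifts of these two algebras realize every free $\bZ/2$-graded $R_\fp$-module.

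Next I would show that these two ``building blocks'' suffice to resolve. The key inputs are that $KK^G_*(\bC,B)=K_*^G(B)$, while $C_0(V)\otimes C_0(V)=C_0(V\oplus V)$ is $KK^G$-equivalent to $\bC$ by equivariant Bott periodicity \cite[Proposition~3.2]{MR0234452}, so $C_0(V)$ is its own inverse in $KK^G$ and hence $KK^G_*(C_0(V),B)\cong K_*^G(C_0(V)\otimes B)=K_*^{G,-}(B)$; moreover the maps induced on $\bK_*^G(-)_\fp$ from $\bC$, resp.\ from a suspension of $C_0(V)$, hit precisely the $K_*^G(B)_\fp$-, resp.\ $K_*^{G,-}(B)_\fp$-, summand of $\bK_*^G(B)_\fp$. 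So, choosing one building block and one localized $KK^G$-class for each element of an $R_\fp$-generating set of $\bK_*^G(B)_\fp$, I obtain a countable direct sum $P$ of grading shifts of $\bC$ and $C_0(V)$ and a class $\eta\in KK^G(P,B)$ with $\eta_\fp\colon\bK_*^G(P)_\fp\twoheadrightarrow\bK_*^G(B)_\fp$ onto. A mapping cone of $\eta$ gives a cofiber sequence $P\to B\to C$, and surjectivity of $\eta_\fp$ forces $\bK_*^G(C)_\fp$ to be a grading shift of $\ker\eta_\fp$, a submodule of a free $R_\fp$-module and hence free; thus $0\to\ker\eta_\fp\to\bK_*^G(P)_\fp\to\bK_*^G(B)_\fp\to0$ is a geometrically realized projective resolution of length one, and $C$ again lies in the bootstrap category.

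For the homological comparison, the external products $K_*^G(A)\otimes K_*^G(B)\to K_*^G(A\otimes B)$, $K_*^{G,-}(A)\otimes K_*^G(B)\to K_*^{G,-}(A\otimes B)$, and $K_*^{G,-}(A)\otimes K_*^{G,-}(B)\to K_*^G(A\otimes B)$ (the last via equivariant Bott) assemble, after localizing and dropping the connecting maps, into a natural transformation $\omega_\fp\colon\bK_*^G(A)_\fp\otimes_{R_\fp}\bK_*^G(B)_\fp\to\bK_*^G(A\otimes B)_\fp$. One checks that $\omega_\fp$ is an isomorphism and the $\Tor$ term vanishes whenever $\bK_*^G(B)_\fp$ is free --- explicitly for direct sums of grading shifts of $\bC$ and $C_0(V)$, and then in general by the equivariant Whitehead theorem in the bootstrap category (such a $B$ is, after localizing at $\fp$, $KK^G$-equivalent to such a direct sum) --- so in particular for $P$ and $C$. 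Now tensoring the cofiber sequence $P\to B\to C$ with $A$ (using nuclearity of $B$, $P$, $C$) yields a six-term exact sequence in $\bK_*^G(A\otimes-)_\fp$; comparing it via $\omega_\fp$ with the four-term exact sequence obtained by applying $\bK_*^G(A)_\fp\otimes_{R_\fp}(-)$ to the length-one resolution, and using that $\omega_\fp$ is already an isomorphism on $P$ and $C$, a diagram chase (the mapping-cone argument of \cite{MR849938}) identifies $\omega_\fp$ for $B$ with a monomorphism whose cokernel is $\Tor_1^{R_\fp}(\bK_*^G(A)_\fp,\bK_{*+1}^G(B)_\fp)$, which is \eqref{eq:hyperPhillips}. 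Finally, the class of $B$ for which this holds for all separable $A$ contains $\bC$ and $C_0(V)$ and is closed under suspension, the ``$2$ out of $3$'' property for cofiber sequences, countable inductive limits (hence direct sums), equivariant Morita equivalence, and exterior equivalence (the last two because $\bK_*^G$ and $\omega_\fp$ are $KK^G$-functorial), hence contains the whole bootstrap category; naturality of the sequence comes from naturality of $\omega_\fp$.

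I expect the main obstacle to be the exceptional prime $p=2$, where $(I,2)=(J,2)$: Proposition~\ref{prop:phivanish} fails (the maps $\varphi$, $\psi$ do not vanish after localizing), and $R_\fp$ is the local ring of a node, hence neither a domain nor of finite global dimension, so one can neither forget the diagram structure of $\bK_*^G(-)_{(I,2)}$ nor appeal to $\operatorname{gldim}\le1$. Here the remedy I would pursue is to treat $\bK_*^G(-)_{(I,2)}$ as a module over the ring $\Lambda$ built from the two connecting maps with both composites equal to $1-t$, observe that $1-t=(0,2)$ under $R_{(I,2)}\cong\bZ_{(2)}\times_{\bF_2}\bZ_{(2)}$, and reorganize $\Lambda$-modules by a Milnor-type pullback along the two minimal primes so that the relevant modules acquire projective dimension $\le1$ over the reorganized ring; the delicate point is to verify that this reinterpretation is compatible with the external products, so that $\omega_\fp$ remains the natural map and the resolution argument above carries through verbatim.
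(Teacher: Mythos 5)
Your overall architecture --- resolve one variable by ``building blocks'' $\bC$ and $C_0(V)$, form a mapping cone, and compare the resulting six-term sequence with the tensored length-one resolution --- is the same as the paper's. But the step on which everything hinges is asserted rather than proved: you claim that $\omega_{\fp}$ is an isomorphism whenever $\bK_*^G(B)_{\fp}$ is free ``by the equivariant Whitehead theorem in the bootstrap category,'' i.e., that such a $B$ is, after localizing at $\fp$, $KK^G$-equivalent to a direct sum of shifts of $\bC$ and $C_0(V)$. No such theorem is available here. A Whitehead/UCT statement for $KK^{\bZ/2}$ is exactly the kind of result whose naive form fails (that failure is the subject of the paper), and even the localized, free-module version you need would require showing that a bootstrap algebra with $\bK_*^G(\cdot)_{\fp}=0$ is $KK^G$-contractible after localization --- far stronger than anything the paper proves. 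The paper deliberately avoids this: Lemma \ref{lem:KGzerogivesKun} and Corollary \ref{cor:fundfam} establish only the much weaker statement that $\bK_*^G(A)_{\fp}=0$ forces $\bK_*^G(A\otimes B)_{\fp}=0$ for $B$ in the bootstrap class, and they do so by induction over equivariant cells and inverse limits; this argument has no analogue in your proposal, and without it (or your unproved Whitehead input) the diagram chase cannot start. Relatedly, you resolve $B$ (the bootstrap variable) rather than $A$: even granting Corollary \ref{cor:fundfam}, your cofiber $C$ of $P\to B$ is a bootstrap algebra being tensored against an \emph{arbitrary} separable $A$, and the vanishing lemma applies only when the acyclic factor is arbitrary and the other factor is bootstrap, so the argument would not close up for the algebra version as stated. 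Resolving $A$ by honest commutative ``Euclidean'' algebras, as in Lemma \ref{lem:geomres}, is what makes the base case of $\omega_{\fp}$ checkable by hand and puts the roles the right way around.

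Second, the theorem is stated for every prime $p$, including $p=2$, and your proposal explicitly does not prove that case: your closing paragraph is a speculative program (Milnor patching of modules over the nodal ring $R_{(I,2)}$) rather than an argument. The paper makes no case distinction at all --- Proposition \ref{prop:phivanish} is stated and invoked for all primes, and the proof of Theorem \ref{thm:main} treats $(I,2)$ on the same footing as $(I,p)$ for $p$ odd. Whether or not your worry about $(I,2)$ (where $(I,2)=(J,2)$ and $R_{(I,2)}$ is not a discrete valuation ring) is well founded, a proof of the stated theorem must cover that prime, and yours does not. In sum: the proposal has two genuine gaps, the unproved Whitehead-type input at its core and the missing prime $2$.
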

The method of proof of this theorem is similar to the one used in
\cite{MR849938} and \cite{MR911880}, or in other words, is based on
the method of geometric resolutions.  First we need to see that the
problem that occurred in our previous counterexample to the K\"unneth
Theorem, having $K^*_G(X)=0$ but $K^*_G(X\times Y) \ne 0$ for some
$Y$, cannot recur.
\begin{lemma}
\label{lem:KGzerogivesKun}
Let $\fp = (I, p)\triangleleft R$, $p$ a prime.
Let $A$ be a $G$-{\Ca} with $\bK_*^G(A)_{\fp} = 0$. Then for any
finite $G$-CW complex $Y$, $\bK_*^G(A\otimes C(Y))_{\fp} = 0$. 
\end{lemma}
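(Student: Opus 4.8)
The plan is to reduce this equivariant vanishing statement to a completely non-equivariant one, by showing that after localizing at $\fp=(I,p)$ the refined invariant $\bK_*^G$ of a $G$-{\Ca} records exactly the same information as its ordinary $K$-theory localized at the integer prime $p$. Throughout, write $K_*(D)_{(p)}$ for $K_*(D)\otimes_\bZ\bZ_{(p)}$.

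The first step is the equivalence: for any $G$-{\Ca} $D$, one has $\bK_*^G(D)_\fp=0$ if and only if $K_*(D)_{(p)}=0$. By Proposition \ref{prop:phivanish} the edge maps $\varphi$ and $\psi$ of the diagram $\bK_*^G(D)$ vanish after localizing at $\fp$, so $\bK_*^G(D)_\fp=0$ means precisely that $K_*^G(D)_\fp=0$ and $K_*^{G,-}(D)_\fp=0$. Now apply $-\otimes_R R_\fp$ to the six-term exact sequence of Proposition \ref{prop:equivandnon-equiv} in its {\Ca} form (indices lowered): localization is exact and $\varphi_\fp=0$, so, exactly as in the proof of Proposition \ref{prop:KunnforG}, the sequence breaks into short exact sequences realizing $K_*(D)_{(p)}$ as an extension of $K_{*+1}^{G,-}(D)_\fp$ by $K_*^G(D)_\fp$. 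Both directions of the equivalence follow immediately, since a subobject and a quotient of $0$ are $0$. (As in that earlier proof, this uses that $K_*(D)$ enters the sequence as $K_*^G$ of a free $G$-{\Ca}, so that $I$ acts trivially on it and $R$-localization at $\fp$ coincides with $\bZ$-localization at $p$.)

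Granting this, the hypothesis $\bK_*^G(A)_\fp=0$ becomes $K_*(A)_{(p)}=0$, and it suffices to prove $K_*(A\otimes C(Y))_{(p)}=0$ --- now a purely non-equivariant assertion, since as a {\Ca} $A\otimes C(Y)\cong C(Y,A)$ and the $G$-action on $Y$ is irrelevant: forgotten, $Y$ is just an ordinary finite CW complex. I would then induct along a cell filtration $\emptyset=Y_{-1}\subset Y_0\subset\cdots\subset Y_N=Y$, each $Y_k\smallsetminus Y_{k-1}$ being a finite disjoint union of open cells. Tensoring the {\Ca} extensions $0\to C_0(Y_k\smallsetminus Y_{k-1})\to C(Y_k)\to C(Y_{k-1})\to0$ with $A$, and using Bott periodicity $K_*(A\otimes C_0(\bR^n))_{(p)}\cong K_{*-n}(A)_{(p)}=0$ together with the six-term $K$-theory exact sequence and the exactness of $-\otimes_\bZ\bZ_{(p)}$, one gets $K_*(A\otimes C(Y_k))_{(p)}=0$ for every $k$; the case $k=N$ is exactly what is wanted. (Equivalently, one may simply invoke the ordinary Künneth theorem for {\Ca}s: $C(Y)$ is nuclear with finitely generated $K$-theory, and localizing the Künneth short exact sequence at $p$ kills both outer terms.) Running the equivalence of the first step backwards then gives $\bK_*^G(A\otimes C(Y))_\fp=0$.

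I do not expect a genuine obstacle here; the lemma is quite short once Propositions \ref{prop:phivanish} and \ref{prop:equivandnon-equiv} are in hand. Its actual content --- and the reason it holds for $\bK_*^G$ although the analogue for $K_*^G$ alone is \emph{false} (the obstruction being precisely the example built from Proposition \ref{prop:KzeroKGnot}, where $K_*^G(B)=0$ but $K_*^G(B\otimes C(G))\neq0$) --- is the equivalence of the first step: upgrading to $\bK_*^G$ reinstates exactly enough information, namely the ordinary $K$-theory, to rule out that pathology. The only place that calls for a little care is pinning down the $R$-module structure on $K_*(D)\cong K_*^G(C(G)\otimes D)$ so that $I$ acts trivially and $R$-localization at $\fp$ matches $\bZ$-localization at $p$; this is already implicit in Propositions \ref{prop:equivKfree} and \ref{prop:equivandnon-equiv}.
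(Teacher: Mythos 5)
Your proposal is correct, but it is organized differently from the paper's proof, and the comparison is instructive. The paper stays in the equivariant world: it inducts over the equivariant cells of $Y$, disposing of a trivial cell $\bR^n$ by a degree shift and of a free cell $G\times\bR^n$ by observing that $K^G_*(A\otimes C_0(G\times\bR^n))\cong K_{*+n}(A)$ sits in the six-term sequence of Proposition \ref{prop:equivandnon-equiv} between two groups that vanish after localization at $\fp$; the $5$-Lemma then carries the induction. You instead first extract from Propositions \ref{prop:phivanish} and \ref{prop:equivandnon-equiv} the two-way equivalence $\bK_*^G(D)_{\fp}=0\iff K_*(D)_{(p)}=0$, and then run a purely non-equivariant cell induction (or invoke the ordinary K\"unneth theorem for $C^*$-algebras). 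Both arguments are sound; note that the paper only needs the ``forward'' half of your equivalence and in particular never invokes Proposition \ref{prop:phivanish} in this lemma, whereas your ``backward'' half --- needed to transport the non-equivariant vanishing of $K_*(A\otimes C(Y))_{(p)}$ back to the vanishing of $\bK_*^G(A\otimes C(Y))_{\fp}$ --- genuinely requires $\varphi_{\fp}=\psi_{\fp}=0$, since otherwise the localized six-term sequence would only exhibit $\varphi_{\fp}$ as an isomorphism rather than force both of its endpoints to vanish. What your route buys is a clean standalone characterization (at $\fp=(I,p)$ the refined invariant vanishes exactly when ordinary $K$-theory vanishes at $p$), which also explains transparently why the counterexample built from Proposition \ref{prop:KzeroKGnot} cannot recur; what the paper's route buys is a template that generalizes directly to Corollary \ref{cor:fundfam}. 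You are also right that the identification of $R_{\fp}$-localization with $\bZ_{(p)}$-localization on $K_*(D)\cong K_*^G(C(G)\otimes D)$ rests on $I$ acting trivially there, which Proposition \ref{prop:equivKfree} supplies.
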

\begin{proof}
We are assuming $K_*^G(A)=0$ and $K_*^{G,-}(A)=0$, and we need to
show $K^G_*(A\otimes C(Y))=0$. (A similar conclusion for
$K_*^{G,-}(A\otimes C(Y))$ follows upon replacing $A\otimes C(Y)$ by
$A\otimes C(Y)\otimes C_0(V)$, where $V$ as usual is the sign
representation of $G$ on $\bR$.) First we show the result holds
when $Y$  is a single (open) $G$-cell, i.e., either $\bR^n$ or $G\times
\bR^n$ with trivial action on $\bR^n$. (Since we're using open cells
here, $C(Y)$ should be replaced by $C_0(Y)$.) But
$K^G_*(A\otimes C_0(\bR^n))\cong K^G_{*+n}(A)$ and
$K^G_*(A\otimes C_0(G\times \bR^n))\cong K_{*+n}(A)$, to which we can
apply Proposition \ref{prop:equivandnon-equiv}. Now
assume $Z$ is a $G$-space for which we know $K_*^G(A\times C_0(Z))=0$
and $K_*^{G,-}(A\times C_0(Z))=0$, and assume $Y$ is obtained from $Z$
by adding a single equivariant cell, so that $Y\smallsetminus Z$ is 
either $\bR^n$ or $G\times \bR^n$ with trivial action on $\bR^n$.
Applying the $5$-Lemma to the $K$-theory sequences
associated to the equivariant short exact sequence
\[
0 \to A\otimes C_0(Z) \to A\otimes C_0(Y) \to 
A\otimes C_0(Y\smallsetminus Z) \to 0,
\]
we get the result for $Y$. Finally, the lemma in full generality
follows by an induction on the $G$-cells of $Y$.
\end{proof}
\begin{corollary}
\label{cor:fundfam}
Let $\cA_G$ denote the category of separable abelian $G$-{\Ca}s
{\lp}contravariantly equivalent to the category of second countable
locally compact Hausdorff $G$-spaces{\rp} and let $\cC_G$ denote the
smallest category of separable $G$-{\Ca}s containing the separable
type I $G$-{\Ca}s and closed under $G$-kernels, $G$-quo\-tients,
$G$-extensions, equivariant inductive limits, crossed products by
actions of $\bR$ or $\bZ$ commuting with the $G$-action, exterior
equivalence, and $G$-Morita equivalence. 
Let $\fp = (I, p)\triangleleft R$, $p$ a prime.
Let $A$ be a $G$-{\Ca} with $\bK_*^G(A)_{\fp} = 0$. Then for any
$G$-{\Ca} $B$ in $\cA_G$ or $\cC_G$, $\bK_*^G(A\otimes B)_{\fp} = 0$. 
\end{corollary}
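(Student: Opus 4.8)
The plan is to fix a $G$-{\Ca} $A$ with $\bK_*^G(A)_{\fp}=0$ and to show that the class
\[
\cM=\bigl\{B : \bK_*^G(A\otimes B)_{\fp}=0\bigr\}
\]
contains both $\cA_G$ and $\cC_G$. First note that, since $K_*^{G,-}(A\otimes B)=K_*^G(A\otimes B\otimes C_0(V))$ and $K_*^G(A\otimes B\otimes C_0(V)\otimes C_0(V))\cong K_*^G(A\otimes B)$ by equivariant Bott periodicity, membership in $\cM$ is unchanged if $B$ (or $A$) is tensored with $C_0(V)$, so one need only keep track of $K_*^G(A\otimes-)_{\fp}$. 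I would then record the closure properties of $\cM$ mirroring the operations that define $\cC_G$, all of them routine: closure under $G$-kernels, $G$-quotients and $G$-extensions, from the $6$-term exact sequences in $\bK_*^G$ applied to $0\to A\otimes I'\to A\otimes B'\to A\otimes(B'/I')\to0$ (which stays exact after applying $A\otimes-$ because the relevant quotients lie in $\cC_G$, hence are nuclear and the extensions are semisplit) together with exactness of localization and the five lemma; closure under equivariant inductive limits, from continuity of $K$-theory and the fact that $A\otimes-$ and localization commute with inductive limits; closure under crossed products by $\bR$ or $\bZ$ commuting with the $G$-action, from the identifications $A\otimes(B\rtimes\bR)\cong(A\otimes B)\rtimes\bR$ (and likewise for $\bZ$) together with the equivariant Connes--Thom isomorphism, respectively the Pimsner--Voiculescu sequence, whose maps are natural and hence localize; and closure under exterior equivalence and $G$-Morita equivalence, because the induced modification of $A\otimes B$ is again of the same type and $\bK_*^G$ is invariant under both.

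The substantive point is to prove $\cA_G\subseteq\cM$. By Lemma \ref{lem:KGzerogivesKun}, $C(Y)\in\cM$ for every finite $G$-CW complex $Y$. For a general compact metrizable $G$-space $X$ I would exhibit $C(X)$ as an equivariant inductive limit of subalgebras of the form $C(K)$ with $K$ a finite $G$-CW complex: $C(X)$ is separable, hence the closed union of the $G$-$C^*$-subalgebras it generates over finite $G$-invariant sets of functions; each such subalgebra is $C$ of a compact metrizable $G$-subspace of some $\bC^n$ carrying a linear $G$-action (via evaluation at the chosen functions); and a compact $G$-invariant subset of a linear representation is a $G$-inverse limit of finite $G$-CW complexes, obtained by fixing a $G$-equivariant triangulation of the ambient representation, passing to successive $G$-subdivisions, and taking at each stage the (finite, $G$-invariant) subcomplex of closed simplices meeting the set. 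Closure of $\cM$ under inductive limits then gives $C(X)\in\cM$ for $X$ compact metrizable; and for $X$ merely second countable locally compact, its one-point compactification $X^+$ is compact metrizable with $\infty$ a $G$-fixed point, so the extension $0\to C_0(X)\to C(X^+)\to\bC\to0$ together with closure of $\cM$ under $G$-kernels gives $C_0(X)\in\cM$. Hence $\cA_G\subseteq\cM$.

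Finally, since $\cC_G$ is the smallest class with the listed closure properties containing the separable type I $G$-{\Ca}s, it remains only to put the separable type I algebras in $\cM$; using closure under $G$-extensions, equivariant inductive limits and $G$-Morita equivalence this reduces, by the standard composition-series argument (continuous-trace subquotients, then an induction over the $G$-cells of the spectrum parallel to Lemma \ref{lem:KGzerogivesKun}, absorbing the $\cK$-coefficients by stability of $\bK_*^G$), to the commutative case $\cA_G\subseteq\cM$ already in hand, so that $\cC_G\subseteq\cM$; this is the step where one must follow Phillips's bootstrap bookkeeping in \cite{MR911880} with care. The hard part of the whole argument is thus not homological but topological: it is the input of the second paragraph that an arbitrary separable abelian $G$-{\Ca} can be assembled by inductive limits out of $C(K)$'s with $K$ finite $G$-CW --- equivalently, that compact metrizable $G$-spaces are $G$-inverse limits of finite $G$-CW complexes --- together with the equivariant structure theory of type I $C^*$-algebras just invoked. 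Everything else is a mechanical combination of the five lemma with the known long exact sequences of equivariant $K$-theory.
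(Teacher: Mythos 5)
Your proof is correct, and its overall architecture --- establish the closure properties of the class $\cM=\{B:\bK_*^G(A\otimes B)_{\fp}=0\}$, feed in Lemma \ref{lem:KGzerogivesKun} as the base case, and defer the type I bookkeeping to Phillips --- matches the paper's. Where you genuinely diverge is in the commutative step. The paper first uses the exact sequence of the pair $(Y,Y^G)$ to split into the trivial-action and free-action cases (the complement of the fixed-point set is free because $G$ has prime order), and then needs only the non-equivariant fact that a compact metrizable space is a countable inverse limit of finite CW complexes, pulled back along $Y\to Y/G$ in the free case. You instead prove directly that every compact metrizable $G$-space is a $G$-inverse limit of finite $G$-CW complexes, by embedding equivariantly into finite-dimensional linear $G$-representations and taking simplicial neighborhoods in successive subdivisions of an equivariant triangulation; this argument is sound (the nested neighborhoods intersect in the space, the restriction maps are surjective, and the limit identification $C(X)=\varinjlim C(K_n)$ is the standard one). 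Your route buys uniformity --- it makes no use of the fixed/free dichotomy and would survive for finite groups that are not of prime order --- at the cost of invoking equivariant triangulability of linear $G$-manifolds (Illman); the paper's route is more elementary on that point but is tied to the special structure of $\bZ/2$-spaces. Both treatments of the $\cC_G$ case are at the same level of detail, namely an appeal to Phillips's composition-series and continuous-trace machinery, so no gap arises there relative to the paper's own standard.
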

\begin{proof}
This follows from Lemma \ref{lem:KGzerogivesKun} by an application of
\cite[Theorem 2.8]{MR849938} or \cite[Theorem 6.4.7]{MR911880}. We
just recall the essence of the argument for the abelian case $B =
C_0(Y)$.  Because of the exact sequence for the pair $(Y, Y^G)$, it is
enough to treat the cases of free and trivial $G$-spaces.  There is
also an easy reduction to the case where the space is compact. But any
compact metrizable space is a countable inverse limit of finite CW
complexes. This plus Lemma \ref{lem:KGzerogivesKun} immediately gives
the case of a trivial $G$-space. If $Y$ is a free compact metrizable
$G$-space, write $Y/G$ as a countable inverse limit of finite CW complexes and
pull back to write $Y$ as a countable inverse limit of free finite
$G$-CW complexes. Since equivariant $K$-theory commutes with
equivariant countable $C^*$-inductive limits, the result follows.
\end{proof}
The next step is to prove Theorem \ref{thm:main} with a projectivity
restriction on $\bK_*^G(A)$.  This first requires a lemma which will also be
needed to do the general case.
\begin{lemma}
\label{lem:geomres}
Let $G$ be a cyclic group of order $2$,  $\fp = (I, p)\triangleleft
R$, $p$ a prime. Let $A$ be a separable $G$-{\Ca}. Let $\cH$ denote an
infinite-dimensional separable complex Hilbert space equipped with a
unitary representation of $G$ that contains both irreducible
representations with infinite multiplicity.
Then there is a commutative
$G$-{\Ca} $C=C_0\bigl(X \amalg (Y\times V)\bigr)$, where $X$ and $Y$
are disjoint unions of finite-dimensional real vector spaces with
trivial $G$-action and $V$ is the sign representation of $G$, and
there is a $G$-homomorphism $\alpha\co C\to A\otimes
C_0(V_{\bC}\oplus\bC)\otimes \cK(\cH)$, such that $\alpha$ induces a surjection
\[
\xymatrix@C+2pc{\bK_*^G(C)_{\fp} \ar@{->>}[r]^(.3){\alpha_*} & 
\bK_*^G(A\otimes C_0(V_{\bC}\oplus \bC)\otimes \cK(\cH))_{\fp} &
\bK_*^G(A)_{\fp} \ar[l]_(.3){\cong} },
\]
where the isomorphism $\bK_*^G(A)_{\fp} \cong \bK_*^G(A\otimes
C_0(V_{\bC}\oplus\bC)\otimes \cK(\cH))_{\fp} $ is the canonical one
coming from equivariant Bott periodicity.

If $\bK_*^G(A)_{\fp}$ is free over $R_{\fp}$, then $C$ and $\alpha$
can be chosen so that $\alpha_*$ is an isomorphism.
\end{lemma}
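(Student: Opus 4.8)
The plan is to build the resolving algebra $C$ cell-by-cell, so to speak, by choosing enough $G$-homomorphisms out of ``elementary'' commutative $G$-algebras to hit a generating set of $\bK_*^G(A)_{\fp}$ as an $R_{\fp}$-module. The key point is that $\bK_*^G$ has, up to grading shift and localization, exactly four ``building blocks'' coming from the subgroups and characters of $G$: the classes detected by $K_*^G$ versus $K_*^{G,-}$, each in even or odd degree. By Examples \ref{ex:invar}, $\bK_*^G(\pt)$ and $\bK_*^G(V)$ between them realize $R/I \cong R_{\fp}$ in both parities (after localization: recall $K^*_{G,-}(\pt) \cong R/J$ localizes to $R_{\fp}$ at $\fp = (I,p)$ since $\fp$ does not contain $J$ — wait, at $\fp=(I,p)$ one has $R/J$ localized is $\bZ/ (\text{something})$; more carefully, use that $\bK^*_G(\pt)_\fp$ has $R_\fp$ in one slot and $R_\fp/\fp$-ish in another, and together with $\bK^*_G(V)_\fp$ and shifts one covers a free generator in each of the four $\bZ/2$-graded-times-$\{K_*^G, K_*^{G,-}\}$ positions). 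Concretely: first I would fix, for each of these four positions, a specific finite-dimensional real $G$-representation $W$ (either $\bR^n$ with trivial action or $\bR^n \oplus V$) so that $C_0(W)$ maps into the given position of $\bK_*^G$ of a point and carries a free $R_\fp$-generator there; the extra $C_0(V_\bC \oplus \bC)$ and $\cK(\cH)$ factors on the target are there precisely so that, via equivariant Bott periodicity and absorption of the regular representation into $\cH$, any $G$-invariant projection class over $A$ (as in the proof of Proposition \ref{prop:phivanish}) is represented by an honest $G$-homomorphism from such a $C_0(W)$.

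Second, for a chosen (possibly infinite, but countable, since $A$ is separable) generating set $\{x_i\}$ of $\bK_*^G(A)_\fp$ over $R_\fp$, I would pick for each $x_i$ a $G$-homomorphism $\alpha_i \co C_0(W_i) \to A \otimes C_0(V_\bC \oplus \bC)\otimes \cK(\cH)$ realizing $x_i$ under the Bott isomorphism, using the projection-picture description of classes in $\bK_*^G(A)_\fp$ together with the fact that $\cK(\cH)$ absorbs finite-dimensional $G$-modules. Setting $C = C_0\bigl(\coprod_i W_i\bigr)$ (split the index set according to whether $W_i$ has a $V$-summand, to get the stated form $C_0(X \amalg (Y\times V))$) and $\alpha = \bigoplus_i \alpha_i$ (with appropriate orthogonality arrangements in $\cK(\cH)$ so the images are mutually orthogonal and the direct sum makes sense as a single $G$-homomorphism into the stable algebra), one gets $\alpha_* \co \bK_*^G(C)_\fp \to \bK_*^G(A)_\fp$ hitting every $x_i$, hence surjective. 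The denominator $p$ is harmless here since $R_\fp$-module generation is what we need and localization is exact.

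Third, for the final sentence: if $\bK_*^G(A)_\fp$ is \emph{free} over $R_\fp$, choose the $\{x_i\}$ to be a \emph{basis}. Then I claim the resulting $\alpha_*$ is injective as well. The source $\bK_*^G(C)_\fp = \bigoplus_i \bK_*^G(C_0(W_i))_\fp$ is, by the point computations of Examples \ref{ex:invar} and Bott periodicity, free on one generator per $i$ (each $W_i$ was chosen to give exactly a single free $R_\fp$-generator in exactly the position of $x_i$, with the complementary slots of $\bK_*^G$ vanishing after localization — this is where the choice between trivial $\bR^n$ and $\bR^n\oplus V$, and the parity, is pinned down). So $\alpha_*$ is an $R_\fp$-module map from a free module with basis $\{[\text{gen}_i]\}$ to the free module $\bK_*^G(A)_\fp$ sending basis to basis, hence an isomorphism.

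The main obstacle I expect is the bookkeeping in the second and third steps: verifying that one can genuinely realize an arbitrary class of $\bK_*^G(A)_\fp$ — a pair consisting of a $K_*^G$-class and a $K_*^{G,-}$-class linked by $\varphi,\psi$ (which vanish after localization by Proposition \ref{prop:phivanish}, so the two components are independent) — by a \emph{commutative} $G$-algebra homomorphism, and that $\bK_*^G(C_0(W_i))_\fp$ really is free of rank one in precisely the intended degree/flavor with no leftover contributions after localizing at $\fp=(I,p)$. This rests on the point-level computations $K^*_G(\pt)=R$, $K^*_{G,-}(\pt)\cong R/J$ and their grading shifts under Bott, localized at $\fp$, together with the stabilization trick from Proposition \ref{prop:phivanish}'s proof (every $K_0$-class is a $G$-invariant projection in $\End(W)\otimes A$, giving a $G$-map $\bC \to \End(W)\otimes A$); assembling these and keeping the target in the fixed ``Bott-shifted, stabilized'' form $A\otimes C_0(V_\bC\oplus\bC)\otimes\cK(\cH)$ so that all four flavors of generator can be accommodated simultaneously is the delicate part.
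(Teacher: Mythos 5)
Your proposal is essentially the paper's proof. The paper also builds $C$ out of Euclidean cells, with $C_0(X)$ ($X$ a disjoint union of trivial-action $\bR^n$'s) responsible for the $K_*^G$ half and $C_0(Y\times V)$ for the $K_*^{G,-}$ half; the only organizational differences are that it obtains the ``surjection from a commutative algebra'' by citing \cite[Proposition 4.1 and Remark 4.2]{MR849938} (applied once to $A$ and once to $A\otimes C_0(V)$) rather than re-running the invariant-projection argument cell by cell, and it gets the isomorphism statement in the free case from that same citation rather than from a basis-to-basis count. Those are the same idea, and your identification of the ``delicate part'' is accurate.

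However, the point you explicitly left dangling --- ``wait, at $\fp=(I,p)$ one has $R/J$ localized is $\bZ/(\text{something})$'' --- is precisely the load-bearing computation, so it has to be closed. For $p$ odd one has $1+t\equiv 2 \not\equiv 0 \bmod (I,p)$, so $1+t$ becomes a unit in $R_{\fp}$ and annihilates $R/J$; hence $(R/J)_{\fp}=0$, i.e.\ $K^*_{G,-}(\text{trivial }G\text{-space})_{\fp}=0$ and, symmetrically, $K^*_{G}(\bR^n\times V)_{\fp}=0$. This is exactly your ``complementary slots vanish'' assertion, and without it the argument genuinely fails: a submodule of $K^G\oplus K^{G,-}$ that surjects onto each summand separately need not be the whole direct sum (think of a diagonal), so surjectivity of the assembled $\alpha_*$, and a fortiori your basis-to-basis isomorphism in the free case, would not follow. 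The paper disposes of this in one line via Example \ref{ex:invar}(1). Do note, though, that for $p=2$ one has $(I,2)=(J,2)$, so $(R/J)_{(I,2)}\cong\bZ_{(2)}\neq 0$ and the complementary slots do \emph{not} vanish; at that prime your argument breaks, and as far as I can see the paper's own proof (and the proof of Proposition \ref{prop:phivanish}, which asserts $K_0^{G,-}(\bC)_{\fp}=0$) has the same difficulty, so the case $p=2$ requires separate care rather than the uniform statement you (and the paper) give.
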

\begin{proof}
By \cite[Proposition 4.1 and Remark 4.2]{MR849938}, which are
proved using the same trick that appeared in the proof of Proposition
\ref{prop:phivanish}, there is a 
commutative {\Ca}, which we may take to be of the form $C_0(X')$ with
$X'$ a disjoint union of countably many
points and lines on which $G$ acts trivially,
and there is a $G$-map $C_0(X'\times \bR) \to A\otimes
C_0(V_{\bC}\times \bR \times \bR)\otimes \cK(\cH)$ inducing a
surjection on $K_*^G$. If $K_*^G(A)_{\fp}$ is
free over $R_{\fp}$, we can choose the induced map on $K_*^G$
localized at $\fp$ to be an isomorphism. We take $X=X'\times \bR$.
Note by Example \ref{ex:invar}(1) (and its suspension) that
$K^*_{G,-}(X)_{\fp} = 0$.

Similarly, we can apply the same argument to $A\otimes C_0(V)$, and
by definition, $K_*^G(A\otimes C_0(V))_{\fp} = K_*^{G,-}(A)_{\fp}$.
We get a trivial $G$-space $Y'$, again a disjoint union of countably many
points and lines, and a $G$-map $C_0(Y'\times \bR) \to A\otimes
C_0(V\times \bR \times \bR)\otimes \cK(\cH)$ inducing a
surjection on $K_*^G$. Again, if  $K_*^G(A\otimes C_0(V\times
\bR^2))_{\fp} \cong K_*^{G,-}(A)_{\fp}$ is
free over $R_{\fp}$, we can choose the induced map on $K_*^G$
localized at $\fp$ to be an isomorphism.  Take $Y=Y'\times \bR$ and
tensor everything with $C_0(V)$. Recall that $C_0(V)\otimes C_0(V)=
C_0(V_{\bC})$. We now have a $G$-map
$C_0(Y\times V)\to A\otimes C_0(V_{\bC}\oplus\bC)\otimes \cK(\cH)$
inducing a surjection on $K_*^{G,-}$ after localization at $\fp$, and
inducing an isomorphism if $K_*^{G,-}(A)_{\fp}$ is free over
$R_{\fp}$. Since $K^*_{G,-}(Y)_{\fp} = 0$, $K^*_{G}(Y\times V)_{\fp} =
0$. The lemma follows upon assembling everything together.
\end{proof}
\begin{theorem}
\label{thm:mainspecial}
Let $G$ be a cyclic group of order $2$,  $\fp = (I, p)\triangleleft
R$, $p$ a prime. Let $A$ be a separable $G$-{\Ca} with
$\bK^G_*(A)_{\fp}$ free over $R_{\fp}$.  Let $B$ be 
any $G$-{\Ca} in $\cA_G$ or $\cC_G$ {\lp}in the notation of \textup{Corollary
\ref{cor:fundfam}{\rp}}. Then there
is a natural isomorphism $\bK_G(A)_{\fp}\otimes_{\fp} \bK_G(B)_{\fp} \to
\bK_G(A\otimes B)_{\fp}$ 
coming from the pairing $\omega_{\fp}$ discussed in \cite[\S6.1]{MR911880}.
\end{theorem}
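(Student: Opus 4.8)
The plan is to run the geometric-resolution argument of \cite{MR849938} and \cite[\S6]{MR911880} in the $RO(\bZ/2)$-graded setting. Since $\bK_*^G(A)_{\fp}$ is free over $R_{\fp}$, Lemma \ref{lem:geomres} supplies a commutative $G$-$C^*$-algebra $C=C_0\bigl(X\amalg(Y\times V)\bigr)$ of the special form described there together with a $G$-homomorphism $\alpha\co C\to A':=A\otimes C_0(V_{\bC}\oplus\bC)\otimes\cK(\cH)$ for which $\alpha_*\co \bK_*^G(C)_{\fp}\to\bK_*^G(A')_{\fp}$ is an isomorphism, where $\bK_*^G(A')_{\fp}\cong\bK_*^G(A)_{\fp}$ canonically via equivariant Bott periodicity and stability; these identifications respect the external pairing $\omega_{\fp}$ because Bott periodicity is multiplicative. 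Thus it suffices (i) to prove the theorem with $A$ replaced by the model $C$, and then (ii) to transfer the conclusion from $C$ back to $A$ along $\alpha$.

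For step (i): $\bK_*^G$, the pairing $\omega_{\fp}$, the functor $-\otimes_{R_{\fp}}\bK_*^G(B)_{\fp}$, and $\Tor$ all commute with countable direct sums of $C^*$-algebras, so we may assume $C$ is a single summand, i.e.\ $C_0(\bR^n)$ with trivial $G$-action or $C_0(\bR^n)\otimes C_0(V)$. After localizing at $\fp$ we may, by Proposition \ref{prop:phivanish}, regard $\bK$ as the (suitably grading-shifted) direct sum of its two components, and the vanishing statements recorded in Lemma \ref{lem:geomres}, namely $K^*_{G,-}(X)_{\fp}=0$ and $K^*_G(Y\times V)_{\fp}=0$, show that each such summand contributes a free rank-one $R_{\fp}$-module concentrated in one component. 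Equivariant Bott periodicity in the trivial directions, together with $C_0(V)\otimes C_0(V)\cong C_0(V_{\bC})$ and Bott periodicity in $V_{\bC}$, then identifies both sides of the K\"unneth sequence --- compatibly with $\omega_{\fp}$ --- with a grading shift of $\bK_*^G(B)_{\fp}$, under which $\omega_{\fp}$ is the external-product isomorphism and the $\Tor_1$ term vanishes. This settles the model case.

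For step (ii): form the equivariant mapping cone $D=C_{\alpha}$ of $\alpha$. The six-term $\bK$-theory sequences of the cofiber sequence $C\to A'\to D$ and the fact that $\alpha_*$ is a localized isomorphism give $\bK_*^G(D)_{\fp}=0$. Since the mapping-cone construction commutes with $-\otimes B$ (using that $C$ is nuclear, and that every algebra in $\cA_G$ or $\cC_G$ is exact), $D\otimes B$ is the mapping cone of $\alpha\otimes\mathrm{id}_B$; as $D$ is separable with $\bK_*^G(D)_{\fp}=0$ and $B\in\cA_G\cup\cC_G$, Corollary \ref{cor:fundfam} gives $\bK_*^G(D\otimes B)_{\fp}=0$, whence $(\alpha\otimes\mathrm{id}_B)_*\co\bK_*^G(C\otimes B)_{\fp}\to\bK_*^G(A'\otimes B)_{\fp}$ is an isomorphism. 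Naturality of $\omega_{\fp}$ (see \cite[\S6.1]{MR911880}) then produces a commutative square whose horizontal maps are the pairings $\omega_{\fp}$ for $C$ and for $A'$ and whose vertical maps are $\alpha_*\otimes\mathrm{id}$ and $(\alpha\otimes\mathrm{id}_B)_*$; the two vertical maps are isomorphisms (the first since $\alpha_*$ is, the second by the previous sentence) and the top $\omega_{\fp}$ is an isomorphism by step (i), so the bottom $\omega_{\fp}$ is an isomorphism too, and undoing the Bott/stability identifications $\bK_*^G(A')_{\fp}\cong\bK_*^G(A)_{\fp}$ and $\bK_*^G(A'\otimes B)_{\fp}\cong\bK_*^G(A\otimes B)_{\fp}$ completes the proof. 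The step needing the most care is (ii): propagating the vanishing $\bK_*^G(D)_{\fp}=0$ to $\bK_*^G(D\otimes B)_{\fp}=0$ --- which is precisely Corollary \ref{cor:fundfam}, where the structure of the bootstrap class $\cC_G$ and Lemma \ref{lem:KGzerogivesKun} enter --- together with the bookkeeping that the mapping cone is compatible with $-\otimes B$ and that every identification in sight is compatible with the pairing $\omega_{\fp}$.
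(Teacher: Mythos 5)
Your proposal is correct and follows essentially the same route as the paper: verify the isomorphism directly for the cells of the model algebra $C$ from Lemma \ref{lem:geomres} via Bott periodicity, then use the mapping cone of $\alpha$ together with Corollary \ref{cor:fundfam} and naturality of $\omega_{\fp}$ to transfer the result to $A$. The only cosmetic difference is that the paper also lists $C(G)$ and its suspensions among the base cases, which are not actually needed since $C$ contains no free $G$-cells.
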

\begin{proof}
The theorem is obviously true for $A=\bC$ with trivial action, and it
follows that it is true for $A=C_0(V)$ also, since this has
essentially the same $K$-theory (except for interchange of $K_0^G$
with $K_0^{G,-}$, by Example \ref{ex:invar}(2)).  Note that since we are
localizing at a maximal ideal containing $I$,
$K_*^{G,-}(\bC)_{\fp}=0$, whereas $K_*^G(C_0(V))_{\fp}=0$. The theorem
is also true for $A=C(G)$ by Theorem \ref{prop:KunnforG}, and then
follows for $A=C_0(G\times V)$ or $A=C_0(G\times \bR)$ as well.

Now let's do the general case. Assume $C$ and $\alpha$ are chosen as
in  Lemma \textup{\ref{lem:geomres}} to induce isomorphisms on $K_*^G$
and on $K_*^{G,-}$ after localization at $\fp$.
Let $W$ denote the mapping cone of
$\alpha$. Then we obtain a short exact sequence of $G$-algebras
\begin{equation}
0 \to A \otimes C_0(V_{\bC}\oplus\bC) \otimes \cK(\cH) \to W \to C \to 0,
\label{eq:mappingcone}
\end{equation}
for which the induced long exact sequence in $K_*^G$ localized at
$\fp$ gives an isomorphism $K_*^G(C)_{\fp} \xrightarrow{\cong}
K_*^G(A)_{\fp} $ and an isomorphism $K_*^{G,-}(C)_{\fp} \xrightarrow{\cong}
K_*^{G,-}(A)_{\fp} $.  It follows that $\bK_*^G(W)_{\fp}=0$. By
Corollary \ref{cor:fundfam}, $\bK_*^G(W\otimes B) =0$. But tensoring
with $B$ is exact, since $B$ is nuclear, so we get a short exact
sequence
\[
0 \to A \otimes C_0(V_{\bC}\oplus\bC) \otimes \cK(\cH) \otimes B
\to W\otimes B \to C \otimes B\to 0
\]
from \eqref{eq:mappingcone}.
The long exact sequence now gives an isomorphism
$(\alpha\otimes 1)_*\co \bK_*^G(C \otimes B)_{\fp} \to
\bK_*^G(A \otimes B)_{\fp}$. But by the case already considered
(because of the special structure of the algebra $C$),
\[
\omega_{\fp}\co \bK_*^G(C)_{\fp}\otimes_{R_{\fp}}\bK_*^G(B)_{\fp}
\xrightarrow{\cong}
\bK_*^G(C \otimes B)_{\fp}.
\]
Combining the isomorphisms and a little diagram chase now shows that
\[
\omega_{\fp}\co \bK_*^G(A)_{\fp}\otimes_{R_{\fp}}\bK_*^G(B)_{\fp}
\xrightarrow{\cong}
\bK_*^G(A \otimes B)_{\fp}.\qedhere
\]
\end{proof}
Finally we can prove the main theorem.
\begin{proof}[Proof of Theorem \ref{thm:main}]
We apply Lemma \ref{lem:geomres} and take the mapping cone sequence
\eqref{eq:mappingcone}. Since $\bK_*^G(C)_{\fp} \xrightarrow{\alpha_*} 
\bK_*^G\bigl(A\otimes C_0(V_{\bC}\oplus \bC)\otimes
\cK(\cH)\bigr)_{\fp} \cong \bK_*^G(A)_{\fp}$ is surjective, $R_{\fp}$
is a PID, and 
$\bK_*^G(C)_{\fp}$ is free over $R_{\fp}$, $\bK_*^G(W)_{\fp}$ is also
free over $R_{\fp}$; in fact 
\begin{equation}
\label{eq:resolution}
\bK_*^G(W)_{\fp} \to \bK_*^G(C)_{\fp} \to \bK_*^G(A)_{\fp}
\end{equation}
is a free $R_{\fp}$-resolution of $\bK_*^G(A)_{\fp}$.  To simplify the
notation, let's replace $A$ by $A\otimes C_0(V_{\bC}\oplus \bC)\otimes
\cK(\cH)$; this is harmless since the theorem will be true for 
$A$ if it's true for the latter. As in the proof of Theorem
\ref{thm:mainspecial}, we tensor \eqref{eq:mappingcone} with $B$ and
apply $\bK_*^G$. We now get a commutative diagram with exact columns,
where the first column comes from tensoring \eqref{eq:resolution} with
$\bK_*^G(B)_{\fp}$ and the second column is the long exact sequence in
$\bK_*^G$. The result is as follows:
\[
\xymatrix@R-.5pc{ 
\Tor_1^{R_{\fp}}(\bK_*^G(A)_{\fp}, \rule[-7pt]{0pt}{7pt}\bK_*^G(B)_{\fp})
\ar@{>->}[d] & \bK_{*+1}^G(A \otimes B)_{\fp}\ar[d] \\
\bK_*^G(W)_{\fp} \otimes_{R_{\fp}} \bK_*^G(B)_{\fp}
\ar[r]^(.6){\omega_{\fp}}
_(.6){\cong} \ar[d]  &\bK_*^G(W\otimes B)_{\fp} \ar[d]\\
\bK_*^G(C)_{\fp} \otimes_{R_{\fp}} \bK_*^G(B)_{\fp}
\ar[r]^(.6){\omega_{\fp}}
_(.6){\cong} 
\ar@{->>}[d]^{\alpha_*\otimes 1}  &\bK_*^G(C\otimes B)_{\fp} \ar[d]^{(\alpha\otimes 1_B)_*}\\
\bK_*^G(A)_{\fp} \otimes_{R_{\fp}} \bK_*^G(B)_{\fp} \ar[r]^(.6){\omega_{\fp}}
&\,\bK_*^G(A\otimes B)_{\fp} .
}
\]
The fact that the middle horizontal arrows are isomorphisms follows
from Theorem \ref{thm:mainspecial}.

We finish the proof with a diagram chase. If a class in
$\bK_*^G(A\otimes B)_{\fp}$ maps to $0$ in $\bK_{*-1}^G(W\otimes
B)_{\fp}$, then it comes from $\bK_*^G(C\otimes B)_{\fp}$ and thus
lies in the image of $\bK_*^G(A)_{\fp} \otimes_{R_{\fp}}
\bK_*^G(B)_{\fp}$ under $\omega_{\fp}$. Furthermore, if a class
$c\in \bK_*^G(A)_{\fp} \otimes_{R_{\fp}} \bK_*^G(B)_{\fp} $ 
maps to $0$ under $\omega_{\fp}$,
then $c = (\alpha_*\otimes 1)(d)$ for some $d\in \bK_*^G(C)_{\fp}
\otimes_{R_{\fp}} \bK_*^G(B)_{\fp}$, and chasing the diagram shows
that $d$ came from $\bK_*^G(W)_{\fp} \otimes_{R_{\fp}}
\bK_*^G(B)_{\fp}$, which shows that $c=0$. Thus $\omega_{\fp}$ is
injective, and its image is the same as the image of 
$\bK_*^G(C\otimes B)_{\fp}$ in $\bK_*^G(A\otimes B)_{\fp}$, which is precisely
the kernel of the boundary map to $\bK_{*-1}^G(W\otimes B)_{\fp}$.

Now consider the cokernel of $\omega_{\fp}$. By the above discussion,
this is identified with the image of $\bK_*^G(A\otimes B)_{\fp}$ in
$\bK_{*-1}^G(W\otimes B)_{\fp}$, which is the kernel of the map
$\bK_{*-1}^G(W\otimes B)_{\fp}\to \bK_{*-1}^G(C\otimes B)_{\fp}$,
which by the diagram again be identified with
$\Tor_1^{R_{\fp}}(\bK_*^G(A)_{\fp}, \bK_{*-1}^G(B)_{\fp})$. That completes
the proof of the main theorem.
\end{proof}

\bibliographystyle{amsplain}
\bibliography{EquivKunneth}

\end{document}